\newtheorem{theorem}{Theorem}[section]
\newtheorem{definition}[theorem]{Definition}
\newtheorem{lemma}[theorem]{Lemma}
\newtheorem{proposition}[theorem]{Proposition}
\newtheorem{corollary}[theorem]{Corollary}
\numberwithin{equation}{section}
\newdimen\bibspace
\renewenvironment{thebibliography}[1]{%
 \section*{\refname 
       \@mkboth{\MakeUppercase\refname}{\MakeUppercase\refname}}%
     \list{\@biblabel{\@arabic\c@enumiv}}%
          {\settowidth\labelwidth{\@biblabel{#1}}%
           \leftmargin\labelwidth
           \advance\leftmargin\labelsep
           \itemsep\bibspace
           \parsep\z@skip     %
           \@openbib@code
           \usecounter{enumiv}%
           \let\p@enumiv\@empty
           \renewcommand\theenumiv{\@arabic\c@enumiv}}%
     \sloppy\clubpenalty4000\widowpenalty4000%
     \sfcode`\.\@m}
    {\def\@noitemerr
      {\@latex@warning{Empty `thebibliography' environment}}%
     \endlist}
           \newcommand{\ud}{\mathrm{d}}
\newcommand{\be}{\begin{equation}}      \newcommand{\ee}{\end{equation}}
\begin{document}

\title{New existence results for prescribed fractional $Q$-curvatures problem on $\mathbb{S}^n$ under pinching conditions}

\author{Zhongwei Tang\thanks{Z. Tang is supported by National Natural Science Foundation of China (12071036).},\, Ning Zhou}

\date{}

\maketitle

\begin{abstract}
In this paper we study the prescribed fractional $Q$-curvatures problem of order $2 \sigma$ on the $n$-dimensional standard sphere $(\mathbb{S}^{n}, g_0)$, where $n\geq3$, $\sigma\in(0,\frac{n-2}{2})$. By combining critical points at infinity approach with Morse theory we obtain new existence results under suitable pinching conditions.
\end{abstract}
{\bf Key words:} Fractional Laplacian, Infinite dimensional Morse theory, Critical points at infinity.

{\noindent\bf Mathematics Subject Classification (2020)}\quad 35R11 · 58E05 · 58E30

\section{Introduction}

The classical Nirenberg problem asks: which function $K$ on the standard $n$-dimensional sphere $(\mathbb{S}^{n}, g_0)$ is the scalar curvature (Gauss curvature in dimension $n=2$) of a metric $g$ that is conformal to $g_0$? On $\mathbb{S}^{2}$, setting $g=e^{2 u} g_0$, the Nirenberg problem is equivalent to solving the following nonlinear elliptic equation
\begin{equation}\label{Gauss}
-\Delta_{g_0} u+1=K e^{2 u}\quad \text { on }\, \mathbb{S}^{2},
\end{equation}
where $\Delta_{g_0}$ is the Laplace-Beltrami operator on $(\mathbb{S}^n, g_0)$. On $\mathbb{S}^{n}$ $(n \geq 3)$, writing the conformal metric as $g=u^{\frac{4}{n-2}} g_0$, the Nirenberg problem is equivalent to the existence of the following nonlinear elliptic equation involving the Sobolev critical exponent
\begin{equation}\label{NP}
-\Delta_{g_0} u+\frac{n(n-2)}{4} u=K u^{\frac{n+2}{n-2}}, \quad u>0 \quad \text { on }\, \mathbb{S}^{n}.
\end{equation}

First of all, Eq. \eqref{NP} is not always solvable. Indeed, we have the Kazdan-Warner type obstruction: for any conformal Killing vector field $X$ on $\mathbb{S}^{n}$, there holds
$$
\int_{\mathbb{S}^{n}}(\nabla_{X} K) u^{\frac{2 n}{n-2}} \,\ud v_{g_{0}}=0
$$
for any solution $u$ of \eqref{NP}. Hence, if $K(\xi)=\xi_{n+1}+2$ for example, then Eq. \eqref{NP} has no positive solutions. Many works were devoted to the problem trying to understand under what conditions on $K$ problem \eqref{Gauss} and \eqref{NP} are solvable, which are divided into three main categories:
\begin{enumerate}[(1)]
  \item Group invariance conditions. See  Escobar-Schoen \cite{EscobarSchoenConformal1986}, Chen \cite{ChenScalar1989}, Hebey \cite{HebeyChangements1990}, etc.
  \item Mountain Path type condition. See Chen-Ding \cite{ChenDingScalar1987}, etc.
  \item Bahri-Coron type condition. See Bahri-Coron \cite{BahriCoronThe1991}, Chang-Yang \cite{ChangYangPrescribing1987,ChangYangConformal1988,ChangYangA1991}, etc.
\end{enumerate}

This paper is concerned with the problem of prescribing fractional $Q$-curvature of order $2 \sigma$, $0<\sigma<\frac{n-2}{2}$, on $(\mathbb{S}^{n},g_0)$. The problem consists of finding a new metric $g$ on $\mathbb{S}^{n}$, conformally equivalent to $g_{0}$ with prescribed fractional $Q$-curvature. Let $K\geq0$ be a smooth function on $\mathbb{S}^{n}$. Set $g=u^{\frac{4}{n-2 \sigma}} g_{0}$, where $u>0$ is a smooth function on $\mathbb{S}^n$. Then $K$ is the fractional $Q$-curvature of order $2 \sigma$ of the metric $g$ if and only if $u$ is a solution to the equation
\begin{equation}\label{1.1}
P_{\sigma} u=c(n, \sigma) K u^{\frac{n+2 \sigma}{n-2 \sigma}}, \quad u>0\quad \text { on }\, \mathbb{S}^{n},
\end{equation}
where $n\geq2$, $0<\sigma<\frac{n}{2}$, $c(n, \sigma)=\Gamma(\frac{n}{2}+\sigma) / \Gamma(\frac{n}{2}-\sigma)$, $\Gamma$ is the Gamma function and $P_{\sigma}$ is the $2 \sigma$-order conformal Laplacian on $\mathbb{S}^{n} .$ The operator $P_{\sigma}$ can be uniquely expressed as following
$$
P_{\sigma}=\frac{\Gamma(B+\frac{1}{2}+\sigma)}{\Gamma(B+\frac{1}{2}-\sigma)}, \quad B=\sqrt{-\Delta_{g_{0}}+\Big(\frac{n-1}{2}\Big)^{2}}.
$$
Moreover, $P_{\sigma}$ can be seen as the pull back operator of $(-\Delta)^{\sigma}$ on $\mathbb{R}^{n}$ via the stereographic projection, where $(-\Delta)^{\sigma}$ is the fractional Laplacian operator. Let $\mathcal{N}$ be the north pole of $\mathbb{S}^{n}$ and define
\be\label{phi}
\begin{aligned}
\Phi: \mathbb{R}^{n} &\rightarrow \mathbb{S}^{n} \backslash\{\mathcal{N}\},\\
x &\mapsto\Big(\frac{2 x}{1+|x|^{2}}, \frac{|x|^{2}-1}{|x|^{2}+1}\Big)
\end{aligned}
\ee
be the inverse of stereographic projection operator from $\mathbb{S}^{n} \backslash\{\mathcal{N}\}$ to $\mathbb{R}^{n}$. Then, by the conformal invariance of $P_{\sigma}$, one has the following relation
$$
P_{\sigma}(\varphi) \circ \Phi=|J_{\Phi}|^{-\frac{n+2\sigma}{2 n}}(-\Delta)^{\sigma}(|J_{\Phi}|^{\frac{n-2\sigma}{2 n}}(\varphi \circ \Phi)),\quad \forall\, \varphi \in C^{\infty}(\mathbb{S}^{n}),
$$
where $|J_{\Phi}|=(\frac{2}{1+|x|^{2}})^{n} .$ Then, for a solution $u$ to \eqref{1.1}, $v(x)=|J_{\Phi}|^{\frac{n-2 \sigma}{2 n}} u(\Phi(x))$ satisfies
$$
(-\Delta)^{\sigma} v=c(n, \sigma)(K \circ \Phi) v^{\frac{n+2 \sigma}{n-2 \sigma}},\quad v>0 \quad \text { on }\, \mathbb{R}^{n}.
$$

Denote $H^{\sigma}(\mathbb{S}^{n})$ as the $\sigma$-order fractional Sobolev space that consists of all functions $u \in L^{2}(\mathbb{S}^{n})$ such that $(1-\Delta_{g_0})^{\sigma / 2} u \in L^{2}(\mathbb{S}^{n})$, with the norm
$$
\|u\|=\Big(\int_{\mathbb{S}^{n}} u P_{\sigma} u  \,\ud v_{g_{0}}\Big)^{1/2}.
$$
Beckner \cite{BecknerSharp1993} showed that the Yamabe ratio
$$
S_{n}:=\inf _{u \in H^{\sigma}(\mathbb{S}^{n}) \backslash\{0\}} \frac{\int_{\mathbb{S}^{n}} u P_{\sigma} u \,\ud v_{g_{0}}}{(\int_{\mathbb{S}^{n}} u^{\frac{2n}{n-2\sigma}} \,\ud v_{g_{0}})^{\frac{n-2\sigma}{n}}}=\frac{\omega_{n}^{\frac{2 \sigma}{n}} \Gamma(\frac{n}{2}+\sigma)}{\Gamma(\frac{n}{2}-\sigma)},
$$
where $\omega_{n}$ denotes the area of the $n$-dimensional unit sphere.

The problem of prescribing fractional $Q$-curvature of order $2\sigma$ on $\mathbb{S}^{n}$ can be considered as generalizations and extensions of the Nirenberg problem and the prescribed Paneitz-Branson curvature problem (see for example \cite{DjadliMalchiodiAhmedouI2002}, \cite{DjadliMalchiodiAhmedouII2002}, and \cite{ChtiouiRiganOn2011}). One may see the work of, among many others, Escobar \cite{EscobarConformal1996}, Chang-Xu-Yang \cite{ChangXuYangA1998}, Han-Li \cite{HanLiThe1999}, Djadli-Malchiodi-Ahmedou \cite{DjadliMalchiodiAhmedouThe2004}, Abdelhedi-Chtioui \cite{AbdelhediChtiouiOn2013} for $\sigma=1 / 2 ;$ Jin-Li-Xiong \cite{JinLiXiongOn2014,JinLiXiongOn2015}, Chen-Liu-Zheng \cite{ChenLiuZhengExistence2016}, Abdelhedi-Chtioui-Hajaiej \cite{AbdelhediChtiouiHajaiejA2016} for $\sigma \in(0,1)$; Jin-Li-Xiong \cite{JinLiXiongThe2017} for $\sigma\in (0,n/2)$; Wei-Xu \cite{WeiXuOn1998,WeiXuPrescribing2009}, Brendle \cite{BrendlePrescribing2003} for $\sigma=n / 2$, and Zhu \cite{ZhuPrescribing2016} for $\sigma>n / 2$.

Problem \eqref{1.1} has a natural variational structure. Solutions can be found as critical points (up to a multiplicative constant) of the functional
$$
J_K(u)=\frac{1}{(\int_{\mathbb{S}^{n}} K u^{\frac{2 n}{n-2 \sigma}} \,\ud v_{g_{0}})^{\frac{n-2 \sigma}{n}}}, \quad u \in \Sigma,
$$
where $\Sigma$ is the unit sphere of $H^{\sigma}(\mathbb{S}^{n})$.

Since $\frac{2 n}{n-2 \sigma}$ corresponds to the critical exponent of the fractional Sobolev embeddings $H^{\sigma}(\mathbb{S}^{n}) \hookrightarrow L^{q}(\mathbb{S}^{n})$, the functional $J_K$ fails to satisfy the Palais-Smale condition on $\Sigma^{+}:=\{u \in \Sigma \mid u>0\}$. This constitutes a strong obstruction for the application of the direct methods of the calculus of variations or even standard variational methods. Therefore, more refined techniques are needed as ``Critical points at infinity theory'' introduced by A. Bahri which we will follow in this work. The critical points at infinity are the ends of the noncompact flow-lines of the gradient vector field $-J_K^{\prime}$ and the precise definition will be introduced by Definition \ref{def:3.1}.

Recently, Malchiodi and Mayer \cite{MalchiodiMayerPinching2019} obtained an interesting existence criterion of the Nirenberg problem \eqref{NP} under some pinching condition. More precisely, let $n \geq 5$ and $K \in C^{\infty}(\mathbb{S}^{n})$ be a positive Morse function, under the following pinching condition
$$
\frac{K_{\max }}{K_{\min }} \leq\Big(\frac{3}{2}\Big)^{\frac{1}{n-2}},
$$
where $K_{\max }:=\max _{\mathbb{S}^{n}} K$, $K_{\min }:=\min _{\mathbb{S}^{n}} K$, and $K$ has at least two critical points with negative Laplacian, they were able to prove that \eqref{NP} has at least a solution.

The analog of this result for the Nirenberg problem on standard half spheres $\mathbb{S}_{+}^{n}$ with Neumann condition was proved in Ahmedou and Ben Ayed \cite{AhmedouBenThe2021}. Very recently, Fourti \cite{FourtiNew2021} yielded similar results for the prescribed mean curvature problem on unit ball $\mathbb{B}^n$ with boundary $\mathbb{S}^{n-1}$.

The aim of this paper is to extend these kind of results to the prescribing fractional $Q$-curvature problem of order $2\sigma$ on $\mathbb{S}^{n}$.

Our main assumption for the function $K$ is the so-called non-degeneracy condition:

${\bf (nd)}$ We assume that $K>0$ is a $C^{2}(\mathbb{S}^{n})$ function and for each critical point $y$ of $K$ we have $\Delta_{g_0} K(y) \neq 0$. That is, $K$ is a positive Morse function and non-degenerate on $\mathbb{S}^{n}$.

Note that by Sard-Smale Theorem, the set of functions having only non-degenerate critical points with $\Delta_{g_0} K(y) \neq 0$ is dense in the set of $C^{2}$ functions. Therefore it is easy to find examples of functions satisfying our assumption.

Let
$$
\mathcal{K}:=\{y \in \mathbb{S}^{n} \mid \nabla_{g_0} K(y)=0\},\quad \mathcal{K}^{+}:=\{y \in \mathcal{K} \mid -\Delta_{g_0} K(y)>0\},
$$
and
$$
\mathcal{M}:=\{\tau_{p}=(y_{1}, \cdots, y_{p}) \in(\mathcal{K}^{+})^{p},\, p\geq 1 \mid y_{i} \neq y_{j},\, \forall\, 1 \leq i \neq j \leq p\}.
$$
By Morse Lemma, $\mathcal{K}^{+}$ is a finite set since $\mathbb{S}^{n}$ is compact.

Our first result provides a new and easily verifiable criterion for the existence of solutions to \eqref{1.1}:

\begin{theorem}\label{thm:1.2}
Let $n \geq 3$, $\sigma\in(0,\frac{n-2}{2})$ and $K$ satisfying the assumption ${\bf (nd)}$. If the following conditions hold
\begin{enumerate}[(i)]
  \item
$$
\frac{K_{\max}}{K_{\min}}<\Big(\frac{3}{2}\Big)^{\frac{\sigma}{n-2 \sigma}};
$$
  \item
  $$
  \sharp\mathcal{K}^+ \geq 2,
  $$
where $\sharp\mathcal{A}$ denotes the cardinality of the finite set $\mathcal{A}$.
\end{enumerate}
Then the problem \eqref{1.1} has at least one solution.
\end{theorem}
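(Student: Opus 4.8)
The plan is to follow the critical-points-at-infinity framework of Bahri adapted to the fractional setting, exactly as set up in the remainder of the paper. Since $J_K$ fails Palais--Smale on $\Sigma^+$, one cannot argue directly; instead one studies the non-compact flow lines of $-J_K'$ and their ends, which concentrate as sums of "standard bubbles" (the images under $P_\sigma$-conformal maps of the extremal functions for $S_n$) centered at points of $\mathcal{K}^+$. The standard expansion of $J_K$ near such a potential critical point at infinity (a single bubble, or a combination of $p$ bubbles) involves, at main order, the value $K(y_i)$ and, at the next order, the term $-c\,\Delta_{g_0}K(y_i)/K(y_i)^{\cdots}$ together with the "interaction" terms $\varepsilon_{ij}$ between distinct bubbles. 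I would first recall (citing the appropriate earlier section) the classification of critical points at infinity: they are in bijection with elements $\tau_p \in \mathcal{M}$, and the one carrying a single mass $y\in\mathcal{K}^+$ has a Morse index at infinity equal to $n - \mathrm{ind}(K,y)$, where $\mathrm{ind}(K,y)$ is the Morse index of $K$ at $y$; more generally for $\tau_p$ the index at infinity is $(p-1) + \sum_{i=1}^p (n - \mathrm{ind}(K,y_i))$, and the associated critical value at infinity is $S_n\big(\sum_i K(y_i)^{-(n-2\sigma)/2\sigma}\big)^{2\sigma/n}$ up to the universal normalization.

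The heart of the argument is then a topological/Morse-theoretic contradiction. Suppose \eqref{1.1} has no solution. Then $J_K$ has no critical point in $\Sigma^+$, so the only obstruction to deformation comes from the critical points at infinity, and the sublevel sets of $J_K$ are, up to homotopy, built by attaching cells indexed by the elements of $\mathcal{M}$ with the dimensions listed above. The pinching condition (i), $K_{\max}/K_{\min} < (3/2)^{\sigma/(n-2\sigma)}$, is precisely what is needed so that the critical value at infinity of a two-bubble configuration $(y_1,y_2)$ with $y_1,y_2\in\mathcal{K}^+$ lies \emph{below} the critical value at infinity of a three-bubble configuration: indeed $2K_{\min}^{-(n-2\sigma)/2\sigma} < 3 K_{\max}^{-(n-2\sigma)/2\sigma}$ is equivalent to (i). Hence, working in the sublevel set $J_K^{c}$ with $c$ chosen between these two thresholds, the only critical points at infinity present are the single-bubble ones (over $\mathcal{K}^+$) and the two-bubble ones (over ordered pairs of distinct points of $\mathcal{K}^+$). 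Using condition (ii), $\sharp\mathcal{K}^+\ge 2$, one knows such pairs exist, so there is genuinely a two-bubble stratum. One then computes the Euler characteristic (or a suitable homology group) of $J_K^{c}$ in two ways: on one hand, since $\Sigma^+$ is contractible and contains no critical point of $J_K$, one can retract everything and get a fixed "trivial" value; on the other hand, the Morse inequalities at infinity express the same quantity as an alternating sum over the single- and two-bubble strata, i.e. in terms of $\sum_{y\in\mathcal{K}^+}(-1)^{n-\mathrm{ind}(K,y)}$ and $\sum_{(y_i,y_j)}(-1)^{1+(n-\mathrm{ind}(K,y_i))+(n-\mathrm{ind}(K,y_j))}$. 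Comparing the two yields a relation among the indices that is violated — for instance, the single-bubble contribution already forces $\sum_{y\in\mathcal{K}^+}(-1)^{\mathrm{ind}(K,y)} = 1$ by the usual Nirenberg/Bahri--Coron identity, and then the presence of the genuinely higher-dimensional two-bubble cells below level $c$, which are not cancelled by anything above, produces an extra nontrivial class in $H_*(J_K^{c})$, contradicting contractibility. This contradiction proves that $J_K$ must have a critical point in $\Sigma^+$, i.e. \eqref{1.1} is solvable.

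Concretely I would organize the steps as: (1) recall the bubble expansion of $J_K$ and its gradient, the definition of critical point at infinity (Definition \ref{def:3.1}), and the index formula; (2) translate (i) into the numerical inequality separating the two-bubble and three-bubble critical levels, and fix $c$ strictly between them; (3) show that below level $c$ a pseudo-gradient flow exists whose only rest points at infinity are the one- and two-bubble ones, so that $J_K^{c}$ deformation-retracts onto a CW complex with those cells; (4) assuming no solution, compute $H_*(J_K^{c})$ from this cell structure and, separately, from contractibility of $\Sigma^+$ via the absence of genuine critical points, and derive a contradiction at the level of Euler characteristic / a specific homology group. The main obstacle is step (3) together with the fine analysis underlying the index formula in step (1): one must verify that in the fractional case ($\sigma\in(0,\frac{n-2}{2})$, for \emph{all} $n\ge 3$, including the low dimensions) the self-interaction term $-\Delta_{g_0}K(y)$ genuinely dominates the error terms in the expansion near a single bubble — this is where non-degeneracy \textbf{(nd)} and the Morse structure of $K$ enter — and that the gradient flow can indeed be deformed away from all higher (three or more) bubble configurations within $J_K^{c}$, which relies on delicate estimates for the interaction functions $\varepsilon_{ij}$ of the fractional bubbles and on the absence of critical points at infinity with "degenerate" concentration. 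Dimensions $n=3,4$ (where $2\sigma$ can be close to $n-2$ or the bubble has slow decay) are the most delicate and will require care that the remainder estimates in the $J_K$-expansion still have the right sign and order.
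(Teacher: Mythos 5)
Your overall framework (classification of critical points at infinity, index and level formulas, stratification of levels by the number of bubbles under the pinching condition, and a Morse/Euler-characteristic contradiction) is indeed the paper's strategy, but two steps that you treat as routine are in fact the substance of the proof, and as written they have genuine gaps. First, your claim that condition (i) is \emph{equivalent} to the separation $2K_{\min}^{-(n-2\sigma)/(2\sigma)}<3K_{\max}^{-(n-2\sigma)/(2\sigma)}$ is wrong: that separation only requires $K_{\max}/K_{\min}<(3/2)^{2\sigma/(n-2\sigma)}$, which is weaker than (i). The stronger exponent in (i) is needed because the crucial point is not merely that two-bubble levels lie below three-bubble levels, but that the sublevel sets $J_K^{C_{\max}^{\ell,\infty}+\gamma}$ ($\ell=1,2$) are \emph{contractible}. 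Your justification of contractibility --- ``since $\Sigma^+$ is contractible and contains no critical point of $J_K$, one can retract everything'' --- does not work: the critical points at infinity with three or more bubbles sit \emph{above} the chosen level $c$ and obstruct any such retraction of $\Sigma^+$ onto $J_K^{c}$. The paper's Lemma \ref{lem:4.1} supplies the missing argument by sandwiching $J_K^{\underline{A}}\subset J_1^{A'}\subset J_K^{\overline{A}}$ with $\overline{A}=(K_{\max}/K_{\min})^{(n-2\sigma)/n}\underline{A}$ and interpolating between the flows of $J_K$ and of $J_1:=J_{K\equiv1}$ (whose sublevels are contractible); the extra room $(K_{\max}/K_{\min})^{(n-2\sigma)/n}$ demanded by this sandwich is precisely why the pinching exponent is $\sigma/(n-2\sigma)$ and not $2\sigma/(n-2\sigma)$.

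Second, your contradiction step is not carried out: the assertion that the two-bubble cells ``are not cancelled by anything above'' and hence ``produce an extra nontrivial class'' is unjustified --- cells of different indices can very well cancel in homology, and nothing in your sketch rules this out. The paper instead uses that \emph{both} $J_K^{C_{\max}^{1,\infty}+\gamma}$ and $J_K^{C_{\max}^{2,\infty}+\gamma}$ are contractible (Proposition \ref{pro:4.2} with $k=2$), so $\chi(J_K^{C_{\max}^{2,\infty}+\gamma},J_K^{C_{\max}^{1,\infty}+\gamma})=0$, which by Lemma \ref{lem:3.7} gives $\sum_{y_i\neq y_j\in\mathcal{K}^+}(-1)^{1+\iota(y_i)+\iota(y_j)}=0$ with $\iota(z)=n-\mathrm{ind}(K,z)$; then one needs the combinatorial reduction: the case $A_1\neq1$ is settled by Theorem \ref{thm:1.4}, while $A_1=1$ forces $N=\sharp\mathcal{K}^+$ to be odd, $N=2k+1$, and a count of parities gives $\sum_{i<j}(-1)^{\iota(y_i)+\iota(y_j)}=-k$, whence $k=0$ and $N=1$, contradicting (ii). Without the contractibility lemma and this explicit computation, the proposal does not reach a contradiction; also note your single-bubble identity should read $\sum_{z\in\mathcal{K}^+}(-1)^{n-\mathrm{ind}(K,z)}=1$ (the factor $(-1)^n$ matters for odd $n$).
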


The above pinching condition (i) of Theorem \ref{thm:1.2} can be relaxed when combined with some counting index formula. Namely, we prove

\begin{theorem}\label{thm:1.4}
Let $n \geq 3$, $\sigma\in(0,\frac{n-2}{2})$ and $K$ satisfying the assumption ${\bf (nd)}$. If the following conditions hold
\begin{enumerate}[(i)]
  \item
$$
\frac{K_{\max}}{K_{\min}}<2^{\frac{\sigma}{n-2 \sigma}};
$$
  \item
$$
A_{1}:=\sum_{z \in \mathcal{K}^+}(-1)^{n-i n d(K, z)} \neq 1,
$$
where ${ind}(K, z)$ denotes the Morse index of $K$ at $z$.
\end{enumerate}
Then the problem \eqref{1.1} has at least one solution.
\end{theorem}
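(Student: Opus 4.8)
We argue by contradiction: assume that \eqref{1.1} has no solution, so that $J_K$ possesses no critical point on $\Sigma^{+}$. The plan is to combine the classification of the critical points at infinity of $J_K$ — obtained from the construction of the pseudo-gradient flow in the previous sections, cf. Definition \ref{def:3.1} — with the fact that $\Sigma^{+}$ is contractible, using the pinching condition (i) both to separate the energy levels of the one-bubble critical points at infinity from those carried by several bubbles, and to kill the influence of the latter on the topology.

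I would first recall the picture at infinity: the critical points at infinity of $J_K$ are the configurations $\tau_p=(y_1,\dots,y_p)\in\mathcal{M}$, and $\tau_p$ sits at the energy level
$$
c_\infty(\tau_p)=S_n\Big(\sum_{i=1}^{p}K(y_i)^{-\frac{n-2\sigma}{2\sigma}}\Big)^{\frac{2\sigma}{n}},
$$
the one-bubble critical point at infinity at $y\in\mathcal K^{+}$ having level $S_nK(y)^{-\frac{n-2\sigma}{n}}$ and Morse index $n-\mathrm{ind}(K,y)$. Since (i) gives $K_{\max}/K_{\min}<2^{\sigma/(n-2\sigma)}<2^{2\sigma/(n-2\sigma)}$, a direct computation yields
$$
\sup_{\tau_1}c_\infty(\tau_1)\le S_nK_{\min}^{-\frac{n-2\sigma}{n}}<S_n\,2^{\frac{2\sigma}{n}}K_{\max}^{-\frac{n-2\sigma}{n}}\le\inf_{p\ge 2}\,\inf_{\tau_p}c_\infty(\tau_p),
$$
so there is a regular value $c^{*}$ of $J_K$ with $\sup_{\tau_1}c_\infty(\tau_1)<c^{*}<\inf_{p\ge2}\inf_{\tau_p}c_\infty(\tau_p)$. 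Put $J_{c^{*}}=\{u\in\Sigma^{+}:J_K(u)\le c^{*}\}$. Since $J_K$ has no critical point, the deformation lemma applies at every non-critical level, and crossing a critical level at infinity attaches a cell of the corresponding index; using assumption ${\bf (nd)}$ (which makes each one-bubble critical point at infinity non-degenerate) and the fact that below $c^{*}$ the only critical points at infinity are the one-bubble ones, the pseudo-gradient flow retracts $J_{c^{*}}$ by deformation onto a CW complex having exactly one cell of dimension $n-\mathrm{ind}(K,y)$ for each $y\in\mathcal K^{+}$, so that
$$
\chi\big(J_{c^{*}}\big)=\sum_{y\in\mathcal K^{+}}(-1)^{\,n-\mathrm{ind}(K,y)}=A_1 .
$$

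The heart of the proof is then to show that, under (i), $\Sigma^{+}$ deformation retracts onto $J_{c^{*}}$ (equivalently, the inclusion $J_{c^{*}}\hookrightarrow\Sigma^{+}$ is a homotopy equivalence); this is where the constant $2^{\sigma/(n-2\sigma)}$ is genuinely used. One has to build a pseudo-gradient vector field that, above the level $c^{*}$, pushes every configuration concentrated at $p\ge2$ points of $\mathcal K^{+}$ down below $c^{*}$ — essentially by absorbing the lower bubbles into the dominant one — the pinching guaranteeing that along this ``merging'' motion the gain in energy strictly dominates the cost of the bubble–bubble interactions, so the motion is a genuine descent and no critical point at infinity with $p\ge2$ survives. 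I expect this step — the asymptotic expansion of $J_K$ near multi-bubble configurations, the construction of the merging field, and the verification that the flow stays below $c^{*}$ — to be the main technical obstacle; everything else is routine bookkeeping.

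Granting this, $J_{c^{*}}$ is contractible, hence $1=\chi(\Sigma^{+})=\chi(J_{c^{*}})=A_1$, contradicting hypothesis (ii). Therefore \eqref{1.1} has at least one solution. $\hfill\square$
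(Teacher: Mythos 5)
Your skeleton agrees with the paper's: argue by contradiction, use the pinching to place all one-bubble levels below all multi-bubble levels, and compute the Euler characteristic of a sublevel $J_K^{c^*}$ with $c^*$ in the gap in two ways ($=A_1$ via the Morse theory at infinity, $=1$ via contractibility) to contradict (ii). The computation $\chi(J_K^{c^*})=A_1$ is indeed what Lemma \ref{lem:3.7} plus the Euler--Poincar\'e formula give. But the step you flag as ``the main technical obstacle'' --- contractibility of $J_K^{c^*}$ --- is not a technicality in your formulation: it is wrong as proposed. You want to deformation retract $\Sigma^{+}$ onto $J_K^{c^*}$ by a ``merging'' pseudogradient that pushes every configuration with $p\ge2$ bubbles below $c^*$, so that ``no critical point at infinity with $p\ge2$ survives.'' This contradicts the classification you yourself quoted: by Proposition \ref{pro:3.4} every $(y_1,\dots,y_p)\in\mathcal{M}$ with $p\ge2$ \emph{is} a critical point at infinity, pinching or not (for distinct concentration points the interaction $\varepsilon_{ij}\sim(\lambda_i\lambda_j)^{-(n-2\sigma)/2}$ is negligible compared with the terms $-\Delta_{g_0}K(y_i)/\lambda_i^2>0$, so the $\lambda_i$'s are driven to infinity however $K$ is pinched), and by Lemma \ref{lem:3.7} each such point creates nontrivial relative homology at its level $C_\infty>c^*$. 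Consequently $\Sigma^{+}$ does \emph{not} retract by deformation onto $J_K^{c^*}$; in the absence of critical points it retracts onto $J_K^{c^*}$ together with the unstable manifolds at infinity of the multi-bubble critical points at infinity, which is useless for concluding $\chi(J_K^{c^*})=1$.

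The paper obtains the contractibility by a completely different device (Lemma \ref{lem:4.1} and Proposition \ref{pro:4.2}): comparison with the standard functional $J_1$ ($K\equiv1$), whose sublevels above $S_n$ are contractible. The pointwise inequalities $K_{\max}^{-(n-2\sigma)/n}J_1\le J_K\le K_{\min}^{-(n-2\sigma)/n}J_1$ give the sandwich $J_K^{\underline{A}}\subset J_1^{A'}\subset J_K^{\overline{A}}$ with $\overline{A}=(K_{\max}/K_{\min})^{(n-2\sigma)/n}\underline{A}$, and if $J_K$ has no critical point nor critical point at infinity with level in $[\underline{A},\overline{A}]$, then combining the flows of $J_K$ and $J_1$ one retracts the contractible set $J_1^{A'}$ by deformation onto $J_K^{\underline{A}}$, which is therefore contractible. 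This is also where the full strength of the constant enters: mere separation of the one- and two-bubble levels only needs $K_{\max}/K_{\min}<2^{2\sigma/(n-2\sigma)}$, as you observed, but fitting the ratio $(K_{\max}/K_{\min})^{(n-2\sigma)/n}$ inside the gap $(C_{\max}^{1,\infty},C_{\min}^{2,\infty})$ is exactly what forces the hypothesis $K_{\max}/K_{\min}<2^{\sigma/(n-2\sigma)}$. So the pinching powers the comparison argument, not a merging flow; to repair your proof, replace your retraction of $\Sigma^{+}$ by this sandwich argument applied with $\underline{A}=C_{\max}^{1,\infty}+\gamma$.
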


Our approach follows some arguments developed in \cite{AhmedouBenThe2021} based on the techniques related to the critical points at infinity theory combined with Morse theory. Firstly, we describe the lack of compactness of the problem and characterize the critical points at infinity of its associated functional $J_{K}$. Then we compute the topological contribution of the critical points at infinity to the difference of topology between the level sets of the functional $J_{K}$. Finally, we will derive our existence results by means of two ideas in \cite{AhmedouBenThe2021} related to the pinching condition. The first one is that the pinching condition means that suitable sublevels of $J_{K}$ are contractible. The second one is that critical levels at infinity of $J_{K}$ stratify depending on the number of bubbles.

The structure of our paper is the following. In Section 2, we recall the variational framework and review the lack of compactness. In Section 3, we characterize the critical points at infinity and calculate their topological contributions. Theorems \ref{thm:1.2} and \ref{thm:1.4} are proved in Section 4.

\section{The lack of compactness}

In this section we set up the variational framework of the problem \eqref{1.1} and recall the description of its lack of compactness.

Note that equation \eqref{1.1} admits a natural variational characterization, the Euler-Lagrange functional is
$$
J_{K}(u)=\frac{\|u\|^2}{(\int_{\mathbb{S}^{n}} K u^{\frac{2 n}{n-2 \sigma}} \,\ud v_{g_{0}})^{\frac{n-2 \sigma}{n}}},\quad u \in H^{\sigma}(\mathbb{S}^{n}).
$$
If $u$ is a critical point of the functional $J_K$ in $\Sigma^{+}$, then up to a multiplicative constant, $u$ is a solution of \eqref{1.1}. However, since $\frac{2 n}{n-2 \sigma}$ corresponds to the critical exponent of the fractional Sobolev embeddings $H^{\sigma}(\mathbb{S}^{n}) \hookrightarrow L^{q}(\mathbb{S}^{n})$, the functional $J_K$ does not satisfy the Palais-Smale condition on $\Sigma^{+}$. More precisely, this leads to the possibility of existence of the critical points at infinity, which are the limits of noncompact orbits for the gradient flow of $-J_K$. In fact, let $s \mapsto \eta(s, u)$, $u \in \Sigma^{+}$ be a flow line of the gradient flow of $-J_K$. If $\frac{2 n}{n-2 \sigma}$ in $J_K$ is replaced by $\frac{2 n}{n-2 \sigma}-\varepsilon$, $\varepsilon>0$, $\eta(s, u)$ converges to a critical point in $\Sigma^{+}$. However, in the critical case $\frac{2 n}{n-2 \sigma}$, there are possible obstacles to finding critical points of $J_K$: these are the so called critical points at infinity.

To describe non-converging Palais-Smale sequences we introduce the following notation.

For $a \in \mathbb{S}^{n}$ and $\lambda>0$, we define on $\mathbb{S}^n$ the standard bubble to be
$$
\delta_{a, \lambda}(x)=\bar{c} \frac{\lambda^{\frac{n-2 \sigma}{2}}}{(1+\frac{\lambda^{2}-1}{2}(1-\cos d(x, a)))^{\frac{n-2 \sigma}{2}}},
$$
where $d$ is the geodesic distance on $\mathbb{S}^{n}$ and $\bar{c}$ is chosen such that $\delta_{a, \lambda}$ satisfies
$$
P_{\sigma} \delta_{a, \lambda}=\delta_{a, \lambda}^{\frac{n+2 \sigma}{n-2 \sigma}} \quad \text { on }\, \mathbb{S}^{n}.
$$

We define now the set of potential critical points at infinity associated to the functional $J_{K}$. For $p \in \mathbb{N}_+$ and $\varepsilon>0$, let
$$
\begin{aligned}
V(p, \varepsilon):=\Big\{&u \in \Sigma \mid \exists\, \alpha_{1}, \cdots, \alpha_{p}>0,\, \exists\, a_{1}, \cdots, a_{p} \in \mathbb{S}^{n},\,
\exists\, \lambda_{1}, \cdots, \lambda_{p}>\varepsilon^{-1} \text { with }\\
&\|u-\sum_{i=1}^{p} \alpha_{i} \delta_{a_{i}, \lambda_{i}}\|<\varepsilon,\, |J_K(u)^{\frac{n}{n-2 \sigma}} \alpha_{i}^{\frac{4 \sigma}{n-2 \sigma}} K(a_{i})-1|<\varepsilon,\, \forall\, i,\, \varepsilon_{i j}<\varepsilon,\, \forall\, i \neq j\Big\},
\end{aligned}
$$
where
$$
\varepsilon_{i j}=\Big(\frac{\lambda_{i}}{\lambda_{j}}+\frac{\lambda_{j}}{\lambda_{i}}+\lambda_{i} \lambda_{j} d(a_{i}, a_{j})^{2}\Big)^{-\frac{n-2\sigma}{2}}.
$$

In the following we describe non-converging Palais-Smale sequences. Such a description follows from concentration-compactness arguments, see \cite{BahriAn1996} for details.

\begin{proposition}\label{pro:2.1}
Assume that $J_K$ has no critical points in $\Sigma^{+}$. Let $\{u_{k}\}\subset \Sigma^{+}$ be a sequence such that $J_{K}(u_{k})$ is bounded and $J_K^{\prime}(u_{k}) \rightarrow 0$ as $k\to\infty$. Then there exists an integer $p \in \mathbb{N}_+$, a positive sequence $\varepsilon_{k} \rightarrow 0$, and an extracted subsequence of $\{u_{k}\}$, still denoted $\{u_{k}\}$, such that $u_{k} \in V(p, \varepsilon_{k}) .$
\end{proposition}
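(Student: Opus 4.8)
The plan is to follow the now-standard concentration–compactness analysis for Yamabe-type functionals with critical Sobolev exponent, adapted to the fractional setting via the operator $P_\sigma$ and its conformal covariance. Concretely, suppose $\{u_k\}\subset\Sigma^+$ is a Palais--Smale sequence for $J_K$ with $J_K(u_k)$ bounded and $J_K'(u_k)\to 0$, and assume $J_K$ has no genuine critical point in $\Sigma^+$. First I would normalize: replacing $u_k$ by a suitable multiple, one may assume $\int_{\mathbb S^n} K u_k^{2n/(n-2\sigma)}\,\ud v_{g_0}=1$, so that $\|u_k\|^2=J_K(u_k)$ stays bounded; hence $u_k\rightharpoonup u_\infty$ weakly in $H^\sigma(\mathbb S^n)$ along a subsequence. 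The condition $J_K'(u_k)\to0$ translates (after clearing the normalizing constant) into $P_\sigma u_k - J_K(u_k)^{n/(n-2\sigma)} K u_k^{(n+2\sigma)/(n-2\sigma)} \to 0$ in $H^{-\sigma}$, so the weak limit $u_\infty$ solves the corresponding equation; since $u_\infty\ge0$ and, by hypothesis, cannot be a positive solution, either $u_\infty\equiv0$ or the maximum principle forces $u_\infty\equiv0$. Thus $u_k\rightharpoonup 0$.

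Next I would invoke the fractional concentration–compactness principle (the analog of Lions' lemma for $(-\Delta)^\sigma$, equivalently for $P_\sigma$ on $\mathbb S^n$; see the structure theorem in \cite{BahriAn1996} and its fractional counterparts). Since $\|u_k\|$ is bounded and $u_k\not\to 0$ strongly, the defect measures concentrate at finitely many points $a_1^\infty,\dots,a_p^\infty\in\mathbb S^n$, and the energy quantum at each point is at least $S_n^{n/(2\sigma)}/K_{\max}^{(n-2\sigma)/(2\sigma)}$, which bounds $p$ in terms of $\sup_k J_K(u_k)$. The precise profile decomposition then yields parameters $\alpha_i^{(k)}>0$, $a_i^{(k)}\to a_i^\infty$, $\lambda_i^{(k)}\to\infty$ such that
$$
\Big\|u_k-\sum_{i=1}^{p}\alpha_i^{(k)}\,\delta_{a_i^{(k)},\lambda_i^{(k)}}\Big\|\longrightarrow 0,
$$
with the $\delta_{a,\lambda}$ exactly the extremals of Beckner's inequality used to define the bubbles above. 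This is where the bulk of the technical work lies, and it is the step I expect to be the main obstacle: one must carry the Euclidean profile-decomposition machinery for $(-\Delta)^\sigma$ through the stereographic projection $\Phi$, control the interaction terms, and verify that distinct bubbles separate in the sense that $\varepsilon_{ij}^{(k)}\to0$. The nonlocality of $P_\sigma$ prevents naive cutoff/localization arguments, so one relies instead on the Caffarelli--Silvestre extension or directly on the sharp decay of $\delta_{a,\lambda}$ and the algebraic structure of the functional; fortunately all of this is available in the literature cited (Jin--Li--Xiong, etc.), so in the write-up it can be quoted rather than reproved.

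Finally I would extract the pointwise constraints that appear in the definition of $V(p,\varepsilon)$. Testing the (approximate) Euler--Lagrange equation against each bubble $\delta_{a_i^{(k)},\lambda_i^{(k)}}$ and using the orthogonality estimates from the profile decomposition gives, after dividing by $\|\delta_{a_i,\lambda_i}\|^2$,
$$
J_K(u_k)^{\frac{n}{n-2\sigma}}\,\big(\alpha_i^{(k)}\big)^{\frac{4\sigma}{n-2\sigma}}K\big(a_i^{(k)}\big)\longrightarrow 1,
$$
because the self-interaction of a bubble against the right-hand side reproduces $K(a_i)$ up to $o(1)$ while cross terms are $O(\varepsilon_{ij})=o(1)$. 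Choosing $\varepsilon_k\to0$ to dominate all the error terms simultaneously — namely $\varepsilon_k$ larger than $\|u_k-\sum\alpha_i^{(k)}\delta_i\|$, than $|J_K(u_k)^{n/(n-2\sigma)}(\alpha_i^{(k)})^{4\sigma/(n-2\sigma)}K(a_i^{(k)})-1|$, than $\max_{i\ne j}\varepsilon_{ij}^{(k)}$, and larger than $1/\lambda_i^{(k)}$ for all $i$ — we conclude $u_k\in V(p,\varepsilon_k)$ along the chosen subsequence, which is exactly the assertion of the proposition. The integer $p$ is the one furnished by concentration–compactness, and it is independent of $k$ after passing to the subsequence.
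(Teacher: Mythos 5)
Your proposal is correct and takes essentially the same route as the paper, which in fact offers no proof of Proposition \ref{pro:2.1} but simply invokes the standard concentration--compactness description of Palais--Smale sequences, citing \cite{BahriAn1996}. Your outline (normalization and weak convergence, vanishing of the weak limit via the no-critical-point hypothesis, profile decomposition into finitely many positive bubbles with $\lambda_i\to\infty$ and $\varepsilon_{ij}\to0$ quoted from the literature, testing against each bubble to obtain the constraint on $J_K(u_k)^{\frac{n}{n-2\sigma}}\alpha_i^{\frac{4\sigma}{n-2\sigma}}K(a_i)$, then choosing $\varepsilon_k$ to dominate all errors) is precisely that standard argument spelled out.
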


If $u$ is a function in $V(p, \varepsilon)$, one can find an optimal representation, following the ideas introduced in \cite{BahriCritical1989}. Namely, we have

\begin{proposition}\label{pro:2.2}
Let $p \in \mathbb{N}_+$ and $\varepsilon>0$ small enough. For any $u \in V(p, \varepsilon)$, the following minimization problem
\be\label{MiniPro}
\min _{\alpha_{i}>0,\, a_{i} \in \mathbb{S}^{n},\, \lambda_{i}>0}\Big\|u-\sum_{i=1}^{p} \alpha_{i} \delta_{a_{i}, \lambda_{i}}\Big\|
\ee
has a unique solution $(\alpha, a, \lambda)$ up to a permutation. Thus, we can write $u$ as follows
$$
u=\sum_{i=1}^{p} \alpha_{i} \delta_{a_{i}, \lambda_{i}}+v,
$$
where $v$ belongs to $H^{\sigma}(\mathbb{S}^{n})$ and satisfies the following condition:
$$
\langle v, \varphi_i\rangle=0 \quad \text { for } \,\varphi_i =\delta_{a_{i}, \lambda_{i}},\, \frac{\partial \delta_{a_{i}, \lambda_{i}}}{\partial a_{i}},\, \frac{\partial \delta_{a_{i}, \lambda_{i}}}{\partial \lambda_{i}}, \quad i=1, \cdots, p,\eqno(V_0)
$$
here, $\langle \cdot,\cdot \rangle$ denotes the scalar product in $H^{\sigma}(\mathbb{S}^{n})$ defined by
$$
\langle u, v\rangle=\int_{\mathbb{S}^{n}} v P_{\sigma} u \,\ud v_{g_{0}}.
$$
\end{proposition}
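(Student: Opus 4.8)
The plan is to obtain the optimal representation by a standard implicit-function-theorem argument, following the classical scheme of Bahri \cite{BahriCritical1989} and Bahri--Coron, adapted to the fractional setting via the operator $P_{\sigma}$ and its associated Sobolev space $H^{\sigma}(\mathbb{S}^{n})$. Fix $p\in\mathbb{N}_+$ and consider the map
$$
F:\Sigma\times(0,\infty)^{p}\times(\mathbb{S}^{n})^{p}\times(0,\infty)^{p}\to\mathbb{R},\qquad
F(u,\alpha,a,\lambda)=\Big\|u-\sum_{i=1}^{p}\alpha_i\delta_{a_i,\lambda_i}\Big\|^{2}.
$$
First I would fix a $u\in V(p,\varepsilon)$; by definition there is at least one ``approximate'' choice $(\bar\alpha,\bar a,\bar\lambda)$ with the concentration parameters $\bar\lambda_i>\varepsilon^{-1}$, the $\bar\varepsilon_{ij}<\varepsilon$, and $\|u-\sum\bar\alpha_i\delta_{\bar a_i,\bar\lambda_i}\|<\varepsilon$. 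On the (non-compact) configuration space one cannot minimize directly, so one restricts attention to a neighborhood of $(\bar\alpha,\bar a,\bar\lambda)$ of controlled size in the natural variables $(\alpha_i,\ a_i,\ \lambda_i^{-1}d(a_i,\text{fixed frame}),\ \log\lambda_i)$, and shows that $F$ attains an interior minimum there.

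The key computational input, which I would isolate as the technical heart of the argument, is the non-degeneracy of the Hessian of $F$ (in the concentration variables) at a near-minimizer. Here one uses the well-known expansions, valid for $\varepsilon$ small,
$$
\|\delta_{a_i,\lambda_i}\|^{2}=S_{n}^{\frac{n}{n-2\sigma}}+o(1),\qquad
\langle\delta_{a_i,\lambda_i},\delta_{a_j,\lambda_j}\rangle=c\,\varepsilon_{ij}(1+o(1)),
$$
together with the analogous estimates for the derivatives $\partial\delta_{a_i,\lambda_i}/\partial a_i$ and $\lambda_i\,\partial\delta_{a_i,\lambda_i}/\partial\lambda_i$ and for their mutual scalar products (all of order $O(\varepsilon_{ij})$ or smaller between distinct bubbles). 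These show that, after the natural rescaling of the $a$- and $\lambda$-derivatives, the Hessian of $F$ is a small perturbation of a fixed block-diagonal positive-definite matrix (one $(n+2)\times(n+2)$ block per bubble, built from $\|\delta\|^{2}$, $\|\partial_a\delta\|^{2}$, $\|\lambda\partial_\lambda\delta\|^{2}$), hence invertible with uniformly bounded inverse. The main obstacle is precisely bookkeeping these interaction estimates uniformly in $p$ and in the configuration, and checking that the off-diagonal (bubble--bubble) blocks are genuinely lower order; once the operator $P_{\sigma}$ is in hand these are the fractional analogues of the estimates in \cite{BahriAn1996,BahriCritical1989}, so no new phenomenon arises, only careful accounting.

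Granting the non-degenerate Hessian, existence and local uniqueness of the minimizer $(\alpha,a,\lambda)$ follow from the implicit function theorem applied to $\nabla_{(\alpha,a,\lambda)}F=0$: the near-minimizer $(\bar\alpha,\bar a,\bar\lambda)$ lies in the domain of a local diffeomorphism onto which the gradient equation can be solved, and the solution is the unique critical point of $F$ in the neighborhood, necessarily a strict local (hence, by the size control, global-on-the-relevant-neighborhood) minimum. Uniqueness up to permutation is immediate because $F$ is symmetric under relabeling of the $p$ bubbles. Finally, writing $v:=u-\sum_{i=1}^{p}\alpha_i\delta_{a_i,\lambda_i}$, the first-order conditions $\partial F/\partial\alpha_i=0$, $\partial F/\partial a_i=0$, $\partial F/\partial\lambda_i=0$ translate verbatim into the orthogonality relations $\langle v,\varphi_i\rangle=0$ for $\varphi_i=\delta_{a_i,\lambda_i},\ \partial\delta_{a_i,\lambda_i}/\partial a_i,\ \partial\delta_{a_i,\lambda_i}/\partial\lambda_i$, which is exactly condition $(V_0)$; and $v\in H^{\sigma}(\mathbb{S}^{n})$ with $\|v\|$ small since $u\in V(p,\varepsilon)$. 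This completes the proof.
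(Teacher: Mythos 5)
Your proposal is correct in outline and is exactly the standard Bahri parametrization argument that the paper itself invokes without proof (it simply refers to the ideas of \cite{BahriCritical1989}): localize the minimization of $\|u-\sum_{i}\alpha_i\delta_{a_i,\lambda_i}\|^2$ near an approximate set of parameters furnished by the definition of $V(p,\varepsilon)$, use the almost-orthogonality of the bubbles and of their $a$- and $\lambda$-derivatives to see that the Hessian in $(\alpha,a,\lambda)$ is a small perturbation of a fixed positive-definite block-diagonal matrix, conclude existence and uniqueness (up to permutation) by the implicit function theorem, and read off $(V_0)$ from the first-order conditions, dividing by $\alpha_i>0$ in the $a_i$- and $\lambda_i$-equations. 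The only slip is cosmetic: by conformal invariance $\|\delta_{a,\lambda}\|^2$ is exactly the constant $S_n^{n/(2\sigma)}$ (no $o(1)$, and not $S_n^{n/(n-2\sigma)}$), which does not affect the argument.
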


In the next, we denote $v \in(V_{0})$ to say that $v$ satisfies $(V_{0})$. We first give an expansion in $V(p, \varepsilon)$ of the functional $J_{K}$ on functions of the parameters $\alpha_{i}, a_{i}, \lambda_{i}, v$.

\begin{proposition}\label{pro:expansionJK}
If $p\in \mathbb{N}_+$, $\varepsilon>0$ small enough and $u=\sum_{i=1}^{p} \alpha_{i} \delta_{a_{i}, \lambda_{i}}+v \in V(p, \varepsilon)$ with $v$ satisfies $(V_0)$, we have
$$
\begin{aligned}
J_K(u)=& \frac{\sum_{i=1}^{p} \alpha_{i}^{2} S_n}{(\sum_{i=1}^{p} \alpha_{i}^{\frac{2 n}{n-2 \sigma}} K(a_{i}) S_n)^{\frac{n-2 \sigma}{n}}}\Big[1-\frac{n-2 \sigma}{n} \frac{c_{2}}{\Gamma_{1}} \sum_{i=1}^{p} \alpha_{i}^{\frac{2 n}{n-2 \sigma}} \frac{\Delta_{g_0} K(a_{i})}{\lambda_{i}^{2}}\Big] \\
&+\frac{\sum_{i=1}^{p} \alpha_{i}^{2} S_n}{(\sum_{i=1}^{p} \alpha_{i}^{\frac{2 n}{n-2 \sigma}} K(a_{i}) S_n)^{\frac{n-2 \sigma}{n}}}\Big[\sum_{i \neq j} c_{0}^{\frac{2 n}{n-2 \sigma}} c_{1} \omega_{n} \varepsilon_{i j}\Big(\frac{\alpha_{i} \alpha_{j}}{\Gamma_{2}}-\frac{2 \alpha_{i}^{\frac{n+2 \sigma}{n-2 \sigma}} \alpha_{j} K(a_{i})}{\Gamma_{1}}\Big)\Big] \\
&+\frac{\sum_{i=1}^{p} \alpha_{i}^{2} S_n}{(\sum_{i=1}^{p} \alpha_{i}^{\frac{2 n}{n-2 \sigma}} K(a_{i}) S_n)^{\frac{n-2 \sigma}{n}}}\Big[f(v)+Q(v, v)+o\Big(\sum_{i \neq j} \varepsilon_{i j}\Big)+o(\|v\|^{2})\Big],
\end{aligned}
$$
where
$$
f(v)=-\frac{2}{\Gamma_{1}} \int_{\mathbb{S}^{n}} K\Big(\sum_{i=1}^{p} \alpha_{i} \delta_{a_{i}, \lambda_{i}}\Big)^{\frac{n+2 \sigma}{n-2 \sigma}} v \,\ud v_{g_{0}},
$$
$$
Q(v, v)=\frac{1}{\Gamma_{2}}\|v\|^{2}-\frac{2 n+4 \sigma}{\Gamma_{1}(n-2 \sigma)} \sum_{i=1}^{p} \int_{\mathbb{S}^{n}} K(\alpha_{i} \delta_{a_{i}, \lambda_{i}})^{\frac{4 \sigma}{n-2 \sigma}} v^{2} \,\ud v_{g_{0}},
$$
$$
\Gamma_{1}=\sum_{i=1}^{p} \alpha_{i}^{\frac{2 n}{n-2 \sigma}} K(a_{i}) S_n, \quad \Gamma_{2}=\sum_{i=1}^{p} \alpha_{i}^{2} S_n,
$$
and $c_0, c_1, c_2$ are some constants.
\end{proposition}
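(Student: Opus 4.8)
The plan is the standard one for energy expansions of a Yamabe‑type quotient near a superposition of bubbles: expand the numerator $\|u\|^{2}$ and the denominator $\int_{\mathbb S^n}Ku^{2n/(n-2\sigma)}\,\ud v_{g_0}$ of $J_K$ separately, then recombine them through a Taylor expansion of $x\mapsto x^{-(n-2\sigma)/n}$. By Proposition \ref{pro:2.2} we write $u=S+v$ with $S=\sum_{i=1}^{p}\alpha_i\delta_{a_i,\lambda_i}$ and $v\in(V_0)$; since $\varepsilon$ is small, all $\lambda_i$ are large and all $\varepsilon_{ij}$ small, so the classical pointwise and integral estimates for the standard bubbles on $\mathbb S^n$ (equivalently, through the stereographic projection $\Phi$, for the Aubin--Talenti bubbles on $\mathbb R^n$) are available, and I would use them freely, as in the references quoted in the introduction.

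For the numerator, bilinearity of $\langle\cdot,\cdot\rangle$ gives $\|u\|^{2}=\sum_i\alpha_i^{2}\|\delta_{a_i,\lambda_i}\|^{2}+\sum_{i\ne j}\alpha_i\alpha_j\langle\delta_{a_i,\lambda_i},\delta_{a_j,\lambda_j}\rangle+2\sum_i\alpha_i\langle\delta_{a_i,\lambda_i},v\rangle+\|v\|^{2}$, where the third sum vanishes by $(V_0)$. Using the equation $P_\sigma\delta_{a,\lambda}=\delta_{a,\lambda}^{(n+2\sigma)/(n-2\sigma)}$ one gets $\|\delta_{a_i,\lambda_i}\|^{2}=\int_{\mathbb S^n}\delta_{a_i,\lambda_i}^{2n/(n-2\sigma)}\,\ud v_{g_0}=S_n$ and $\langle\delta_{a_i,\lambda_i},\delta_{a_j,\lambda_j}\rangle=\int_{\mathbb S^n}\delta_{a_i,\lambda_i}\delta_{a_j,\lambda_j}^{(n+2\sigma)/(n-2\sigma)}\,\ud v_{g_0}=c_0^{2n/(n-2\sigma)}c_1\omega_n\varepsilon_{ij}+o(\varepsilon_{ij})$ from the standard interaction estimate; this produces $\Gamma_2$, the term $\sum_{i\ne j}c_0^{2n/(n-2\sigma)}c_1\omega_n\varepsilon_{ij}\alpha_i\alpha_j$, and $\|v\|^{2}$.

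The denominator is the main computation. First I would Taylor‑expand in $v$, controlling the remainder by the embedding $H^\sigma(\mathbb S^n)\hookrightarrow L^{2n/(n-2\sigma)}(\mathbb S^n)$ together with $2n/(n-2\sigma)>2$ (and Hölder's inequality for the cubic remainder), which yields a linear term proportional to $\int_{\mathbb S^n}KS^{(n+2\sigma)/(n-2\sigma)}v\,\ud v_{g_0}$ and a quadratic term proportional to $\int_{\mathbb S^n}KS^{4\sigma/(n-2\sigma)}v^{2}\,\ud v_{g_0}$, up to $o(\|v\|^{2})$. For the leading piece $\int_{\mathbb S^n}KS^{2n/(n-2\sigma)}\,\ud v_{g_0}$ I would split $\mathbb S^n$ into the geodesic balls $B(a_i,\rho)$ and their complement. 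On $B(a_i,\rho)$ the dominant term of $S$ is $\alpha_i\delta_{a_i,\lambda_i}$; expanding $K$ in geodesic normal coordinates at $a_i$, the gradient term integrates to zero because $\delta_{a_i,\lambda_i}^{2n/(n-2\sigma)}$ is radially symmetric about $a_i$, and by the same symmetry only the trace of the Hessian survives, contributing $c_2\alpha_i^{2n/(n-2\sigma)}\Delta_{g_0}K(a_i)\lambda_i^{-2}+o(\lambda_i^{-2})$. The tails outside the balls and the off‑diagonal contributions reduce to $\frac{2n}{n-2\sigma}\sum_{i\ne j}\alpha_i^{(n+2\sigma)/(n-2\sigma)}\alpha_jK(a_i)\int_{\mathbb S^n}\delta_{a_i,\lambda_i}\delta_{a_j,\lambda_j}^{(n+2\sigma)/(n-2\sigma)}\,\ud v_{g_0}$, i.e. $\frac{2n}{n-2\sigma}\sum_{i\ne j}c_0^{2n/(n-2\sigma)}c_1\omega_n\varepsilon_{ij}\alpha_i^{(n+2\sigma)/(n-2\sigma)}\alpha_jK(a_i)$ modulo $o(\sum\varepsilon_{ij})$.

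It then remains to assemble the pieces: insert both expansions into $J_K(u)=\|u\|^{2}\big(\int_{\mathbb S^n}Ku^{2n/(n-2\sigma)}\,\ud v_{g_0}\big)^{-(n-2\sigma)/n}$, Taylor‑expand $(\Gamma_1+\text{corrections})^{-(n-2\sigma)/n}=\Gamma_1^{-(n-2\sigma)/n}(1-\tfrac{n-2\sigma}{n}\Gamma_1^{-1}(\text{corrections})+\cdots)$, multiply out, factor the common prefactor $\Gamma_2\Gamma_1^{-(n-2\sigma)/n}=\sum_i\alpha_i^2 S_n/(\sum_i\alpha_i^{2n/(n-2\sigma)}K(a_i)S_n)^{(n-2\sigma)/n}$, and keep the terms of order $1$, $\lambda_i^{-2}$, $\varepsilon_{ij}$, $\|v\|$, $\|v\|^{2}$, absorbing the rest into $o(\sum\varepsilon_{ij})+o(\|v\|^{2})$. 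The factor $-\tfrac{n-2\sigma}{n}$ converts the $\tfrac{2n}{n-2\sigma}$ in front of the $v$‑linear and off‑diagonal terms into the constants of $f(v)$ and of the $\varepsilon_{ij}$‑bracket; the $\|v\|^{2}/\Gamma_2$ in $Q(v,v)$ comes from the numerator and the other quadratic term from the $v^{2}$ part of the denominator, after replacing $S^{4\sigma/(n-2\sigma)}$ by $\sum_i(\alpha_i\delta_{a_i,\lambda_i})^{4\sigma/(n-2\sigma)}$ modulo $o(\|v\|^{2})$. The formal bookkeeping is routine; the one genuine obstacle is the uniform control of all remainders over $V(p,\varepsilon)$ — that the $v$‑linear part of the numerator truly vanishes (this is exactly what $(V_0)$ buys), that the Taylor error of $K$ near each $a_i$ is $o(\lambda_i^{-2})$, that the mixed bubble--bubble and bubble--$(K-K(a_i))$ integrals are $o(\varepsilon_{ij})+o(\|v\|^{2})$, and that the cubic‑and‑higher part in $v$ is $o(\|v\|^{2})$ — all of which rest on Hölder's inequality, the fractional Sobolev inequality, and the sharp decay estimates for $\delta_{a,\lambda}$, exactly along the lines of the works cited in the introduction.
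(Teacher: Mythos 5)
Your proposal is correct and is essentially the paper's own route: the authors omit the computation, referring to Appendix A of \cite{ChenLiuZhengExistence2016}, and what you outline is exactly that standard expansion --- split $u=S+v$ with the $(V_0)$-orthogonality killing the linear term of the numerator, use the bubble interaction estimate $\langle\delta_{a_i,\lambda_i},\delta_{a_j,\lambda_j}\rangle= c_0^{\frac{2n}{n-2\sigma}}c_1\omega_n\varepsilon_{ij}+o(\varepsilon_{ij})$, localize and use radial symmetry to produce the $\Delta_{g_0}K(a_i)/\lambda_i^{2}$ term, Taylor-expand the denominator in $v$, and recombine through $x\mapsto x^{-\frac{n-2\sigma}{n}}$. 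One small point worth recording: carried out as you describe, the $v^{2}$ term of the denominator enters with the factor $\tfrac{q(q-1)}{2}$, $q=\tfrac{2n}{n-2\sigma}$, so after multiplication by $-\tfrac{n-2\sigma}{n}$ the coefficient in $Q(v,v)$ comes out as $\tfrac{n+2\sigma}{(n-2\sigma)\Gamma_1}$ rather than the printed $\tfrac{2n+4\sigma}{(n-2\sigma)\Gamma_1}$; the printed constant appears to be a misprint (with it, the coercivity $Q(v,v)\ge\delta_0\|v\|^{2}$ on $(V_0)$ invoked in the proof of Proposition \ref{pro:2.4} could not hold when $6\sigma<n+2$, by the spectral decomposition of $P_\sigma$ against the weight $\delta_{a,\lambda}^{4\sigma/(n-2\sigma)}$), so your computation in fact recovers the standard constant.
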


\begin{proof}
The proof is similar to Appendix A of \cite{ChenLiuZhengExistence2016} and we omit it here.
\end{proof}

Set
$$
H_{\varepsilon}(a, \lambda)=\{v \in H^{\sigma}(\mathbb{S}^{n}) \mid v \text { satisfies } (V_0) \text{ and } \|v\| \leq \varepsilon\}.
$$
The following result gets rid of the $v$-contributions, i.e., $v$ can be neglected with respect to the concentration phenomenon.

\begin{proposition}\label{pro:2.4}
There is a $C^{1}$-map which to each $(\alpha_{i}, a_{i}, \lambda_{i})$ such that $u=\sum_{i=1}^{p} \alpha_{i} \delta_{a_{i}, \lambda_{i}}$ belongs to $V(p, \varepsilon)$ associates $\bar{v}=\bar{v}(\alpha, a, \lambda)$ such that $\bar{v}$ is unique and satisfies
$$
J_K\Big(\sum_{i=1}^{p} \alpha_{i} \delta_{a_{i}, \lambda_{i}}+\bar{v}\Big)=\min_{v \in(V_{0})}J_K\Big(\sum_{i=1}^{p} \alpha_{i} \delta_{a_{i}, \lambda_{i}}+v\Big).
$$
Moreover, we have the following estimates:
$$
\|\bar{v}\| \leq C\Big[\sum_{i=1}^{p}\Big(\frac{|\nabla_{g_0} K(a_{i})|}{\lambda_{i}}+\frac{1}{\lambda_{i}^{2}}\Big)+\sum_{i \neq j}
\begin{cases}
\varepsilon_{i j}^{\frac{n+2 \sigma}{2(n-2 \sigma)}} (\log \varepsilon_{i j}^{-1})^{\frac{n+2 \sigma}{2 n}} &\text { if }\, n\geq 6\sigma \\
\varepsilon_{i j} (\log \varepsilon_{i j}^{-1})^{\frac{n-2 \sigma}{n}} & \text { if }\, n <6\sigma
\end{cases}\Big].
$$
\end{proposition}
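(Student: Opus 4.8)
The plan is to carry out the usual finite-dimensional (Lyapunov--Schmidt) reduction. Fix $(\alpha,a,\lambda)$ with $u=\sum_{i=1}^{p}\alpha_i\delta_{a_i,\lambda_i}\in V(p,\va)$ and invoke the expansion of Proposition~\ref{pro:expansionJK}, which, up to the positive prefactor $\Gamma_2/\Gamma_1^{(n-2\sigma)/n}$, reads
$$
J_K(u+v)\ \propto\ 1+(\text{terms not involving }v)+f(v)+Q(v,v)+o\Big(\sum_{i\neq j}\va_{ij}\Big)+o(\|v\|^2),
$$
with $f$ linear and $Q$ the explicit quadratic form of that proposition. Minimizing $J_K(u+v)$ over $v\in H_\va(a,\lambda)$ is then equivalent to minimizing $v\mapsto f(v)+Q(v,v)$ plus lower order terms over the closed ball $\{v\in(V_0):\|v\|\le\va\}$.

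The key step will be the uniform coercivity of $Q$ on $(V_0)$: I would show that there is a constant $c>0$, independent of $(\alpha,a,\lambda)$ for $\va$ small, such that $Q(v,v)\ge c\|v\|^2$ for every $v\in(V_0)$. For $p=1$ this amounts to the positivity, on the orthogonal complement of $\mathrm{span}\{\delta_{a,\lambda},\partial_a\delta_{a,\lambda},\partial_\lambda\delta_{a,\lambda}\}$, of the operator $P_\sigma-\tfrac{n+2\sigma}{n-2\sigma}\delta_{a,\lambda}^{4\sigma/(n-2\sigma)}$, which follows from the non-degeneracy of the standard bubble as a solution of the fractional Yamabe equation together with the classification of the kernel of its linearization. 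For $p\ge2$, the constraint $\va_{ij}<\va$ makes the bubbles nearly mutually orthogonal, so $Q$ splits into $p$ such blocks up to cross terms of size $O(\va_{ij})$ that are absorbed into the positive part; a standard blow-up/contradiction argument (as in \cite{BahriCritical1989}) then yields the uniform bound. I expect this coercivity, uniform in the concentration parameters, to be the main obstacle.

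Granting it, the functional $v\mapsto f(v)+Q(v,v)+o(\|v\|^2)$ is strictly convex on $H_\va(a,\lambda)$ for $\va$ small, hence admits a unique minimizer $\bar v=\bar v(\alpha,a,\lambda)$; equivalently $\bar v$ is the unique solution in $(V_0)$ of the projected Euler--Lagrange equation $f'+Q'(\bar v)+R(\bar v)=0$, obtained by the implicit function theorem / a contraction mapping on $(V_0)$, the remainder $R$ being of higher order so that the fixed point lies strictly inside the ball once $\va$ is small. The $C^1$-dependence of $\bar v$ on $(\alpha,a,\lambda)$ then also follows from the implicit function theorem, since $Q'$ is boundedly invertible on $(V_0)$ and all the data are $C^1$. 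Finally, from $\|\bar v\|\lesssim\|f'\|_{(V_0)}+(\text{error})$ one reads off the stated estimate: using $P_\sigma\delta_{a_i,\lambda_i}=\delta_{a_i,\lambda_i}^{(n+2\sigma)/(n-2\sigma)}$ and a second order Taylor expansion of $K$ at each $a_i$, the self-interaction of each bubble contributes $|\nabla_{g_0}K(a_i)|/\lambda_i+1/\lambda_i^2$, while the mutual interactions contribute the $\va_{ij}$-terms, the dichotomy $n\ge6\sigma$ versus $n<6\sigma$ corresponding to whether $\tfrac{n+2\sigma}{n-2\sigma}\le2$ or $>2$ in the expansion of $\big(\sum_i\alpha_i\delta_{a_i,\lambda_i}\big)^{(n+2\sigma)/(n-2\sigma)}$. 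These integral estimates are precisely those carried out in \cite{ChenLiuZhengExistence2016,BahriCritical1989}, so I would simply cite them rather than repeat the computation.
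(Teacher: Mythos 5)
Your proposal follows essentially the same route as the paper's proof: expand $J_K(u+v)$ via Proposition \ref{pro:expansionJK}, use the uniform coercivity of $Q$ on $(V_0)$ (which the paper simply cites from \cite{AbdelhediChtiouiThe2007}) to obtain a unique minimizer $\bar v$ solving $f+A\bar v+o(\|\bar v\|)=0$ with $C^1$-dependence on the parameters, and then bound $\|\bar v\|\lesssim\|f\|$ using the orthogonality $\langle v,\delta_{a_i,\lambda_i}\rangle=0$ plus a Taylor expansion of $K$ for the self-interaction and the standard cross-term integrals giving the $\varepsilon_{ij}$-dichotomy at $n=6\sigma$. The only differences are presentational (you sketch the coercivity and cite the integral estimates, while the paper cites the coercivity and writes out the estimates), so the argument is correct and essentially identical.
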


\begin{proof}
By Proposition \ref{pro:2.2}, the parameterization of $V(p, \varepsilon)$ is given by
$$
\begin{aligned}
B_{\varepsilon} \times H_{\varepsilon}(a, \lambda) &\rightarrow V(p, \varepsilon) \\
(\alpha, a, \lambda, v) &\mapsto u=\sum_{i=1}^{p} \alpha_{i} \delta_{a_{i}, \lambda_{i}}+v,
\end{aligned}
$$
where $B_{\varepsilon}=\{(\alpha, a, \lambda) \in (\mathbb{R}_+)^{p} \times(\mathbb{S}^{n})^{p} \times(\mathbb{R}_+)^{p} \mid \varepsilon_{i j} \leq \varepsilon,\, \lambda_{i}>{\varepsilon}^{-1}\}$, $(\alpha, a, \lambda)$ is the solution of the minimizing problem \eqref{MiniPro} in $B_{\varepsilon}$, $v \in H_{\varepsilon}(a, \lambda)$. Since $(\alpha, a, \lambda) \in B_{\varepsilon}$, $\varepsilon_{i j}$'s are small. Using the same arguments as in \cite{AbdelhediChtiouiThe2007}, we have
$$
Q(v, v) \geq \delta_{0}\|v\|^{2}, \quad \forall\, v \in H_{\varepsilon}(a, \lambda),
$$
where $\delta_{0}>0$ is a constant. Thus there exists an invertible operator $A$ such that $Q(v,v)=\frac{1}{2}\langle A v, v\rangle$ on $H_{\varepsilon}(a, \lambda)$ and $\beta_{0} \mathrm{Id} \leq A \leq \beta_{1} \mathrm{Id}$, where $\beta_{1}>\beta_{0}>0$ are some constants. By Proposition \ref{pro:expansionJK}, we have
$$
\begin{aligned}
&J_K\Big(\sum_{i=1}^{p} \alpha_{i} \delta_{a_{i}, \lambda_{i}}+v\Big) \\
=&\frac{\sum_{i=1}^{p} \alpha_{i}^{2} S_n}{(\sum_{i=1}^{p} \alpha_{i}^{\frac{2 n}{n-2 \sigma}} K(a_{i}) S_n)^{\frac{n-2 \sigma}{n}}}\Big[1-\frac{n-2 \sigma}{n} \frac{c_{2}}{\Gamma_{1}} \sum_{i=1}^{p} \alpha_{i}^{\frac{2 n}{n-2 \sigma}} \frac{\Delta_{g_0} K(a_{i})}{\lambda_{i}^{2}}\Big] \\
&+\frac{\sum_{i=1}^{p} \alpha_{i}^{2} S_n}{(\sum_{i=1}^{p} \alpha_{i}^{\frac{2 n}{n-2 \sigma}} K(a_{i}) S_n)^{\frac{n-2 \sigma}{n}}}\Big[\sum_{i \neq j} c_{0}^{\frac{2 n}{n-2 \sigma}} c_{1} \omega_{n} \varepsilon_{i j}\Big(\frac{\alpha_{i} \alpha_{j}}{\Gamma_{2}}-\frac{2 \alpha_{i}^{\frac{n+2 \sigma}{n-2 \sigma}} \alpha_{j} K(a_{i})}{\Gamma_{1}}\Big)\Big] \\
&+\frac{\sum_{i=1}^{p} \alpha_{i}^{2} S_n}{(\sum_{i=1}^{p} \alpha_{i}^{\frac{2 n}{n-2 \sigma}} K(a_{i}) S_n)^{\frac{n-2 \sigma}{n}}}\Big[f(v)+\frac{1}{2}\langle A v, v\rangle+o\Big(\sum_{i \neq j} \varepsilon_{i j}\Big)+o(\|v\|^{2})\Big].
\end{aligned}
$$
Since the term $o(\|v\|^{2})$ is twice differentiable in $v$, and it's differential at the origin is $o(\|v\|)$, we get
$$
\langle J_K^{\prime}(v), h\rangle=\frac{\sum_{i=1}^{p} \alpha_{i}^{2} S_n}{(\sum_{i=1}^{p} \alpha_{i}^{\frac{2 n}{n-2 \sigma}} K(a_{i}) S_n)^{\frac{n-2 \sigma}{n}}}[f(h)+\langle A v, h\rangle+\langle o(\|v\|), h\rangle] .
$$
Note that the second differential of $o(\|v\|^{2})$ is $o(1)$, it follows that the functional $f(v)+\frac{1}{2}\langle A v, v\rangle+o(\|v\|^{2})$ is coercive in a neighborhood of the origin. Consequently, $f(v)+\frac{1}{2}\langle A v, v\rangle+o(\|v\|^{2})$ has a unique minimum $\bar{v}$ in a neighborhood of 0 in $H_{\varepsilon}(a, \lambda)$, and $\bar{v}$ satisfies
$$
f+A \bar{v}+o(\|\bar{v}\|)=0.
$$
Since the operator $A+o(1)$ is positive and invertible in a neighborhood of the origin, we obtain that $A^{-1}$ satisfies $\frac{1}{2 \beta_{1}} \mathrm{Id} \leq A^{-1} \leq \frac{2}{\beta_{0}} \mathrm{Id}$. Moreover,
$$
\|\bar{v}\| \leq C_1\|A^{-1} f\| \leq C_2\|f\|,
$$
where $C_1, C_2>0$ are some constants and $f$ is defined in Proposition \ref{pro:expansionJK}. Thus, we only need to estimate $\|f\|$. First, we have
$$
\begin{aligned}
&\int_{\mathbb{S}^{n}} K\Big(\sum_{i=1}^{p} \alpha_{i} \delta_{a_{i}, \lambda_{i}}\Big)^{\frac{n+2 \sigma}{n-2 \sigma}} v \,\ud v_{g_{0}}\\
=&\sum_{i=1}^{p} \alpha_{i}^{\frac{n+2 \sigma}{n-2 \sigma}} \int_{\mathbb{S}^{n}} K \delta_{a_{i}, \lambda_{i}}^{\frac{n+2 \sigma}{n-2 \sigma}} v \,\ud v_{g_{0}}\\
&+O(\|v\|)\Big[\int_{\mathbb{S}^{n}} \sum_{i \neq j}(\alpha_{i} \delta_{a_{i}, \lambda_{i}})^{\frac{8 n \sigma}{n^{2}-4 \sigma^{2}}} \inf [(\alpha_{i} \delta_{a_{i}, \lambda_{i}})^{\frac{2 n}{n+2 \sigma}},(\alpha_{j} \delta_{a_{j}, \lambda_{j}})^{\frac{2 n}{n+2 \sigma}}] \,\ud v_{g_{0}}\Big]^{\frac{n+2 \sigma}{2 n}}.
\end{aligned}
$$
Easy computations lead to
$$
\begin{aligned}
&\Big[\int_{\mathbb{S}^{n}} \sum_{i \neq j}(\alpha_{i} \delta_{a_{i}, \lambda_{i}})^{\frac{8 n \sigma}{n^{2}-4 \sigma^{2}}} \inf [(\alpha_{i} \delta_{a_{i}, \lambda_{i}})^{\frac{2 n}{n+2 \sigma}},(\alpha_{j} \delta_{a_{j}, \lambda_{j}})^{\frac{2 n}{n+2 \sigma}}] \,\ud v_{g_{0}}\Big]^{\frac{n+2 \sigma}{2 n}}\\
=&
\begin{cases}
O(\sum_{i \neq j}(\varepsilon_{i j})^{\frac{n+2 \sigma}{2(n-2 \sigma)}}(\log \varepsilon_{i j}^{-1})^{\frac{n+2 \sigma}{2 n}})\quad &\text{ if }\, n \geq 6\sigma,\\
O(\sum_{i \neq j}\varepsilon_{i j}(\log \varepsilon_{i j}^{-1})^{\frac{n-2 \sigma}{n}})\quad &\text{ if }\, n <6\sigma.
\end{cases}
\end{aligned}
$$
Using the fact that $\langle v, \delta_{a_i, \lambda_i}\rangle=0$, we obtain
$$
\begin{aligned}
\int_{\mathbb{S}^{n}} K \delta_{a_{i}, \lambda_{i}}^{\frac{n+2 \sigma}{n-2 \sigma}} v \,\ud v_{g_{0}}=&
O\Big(\int_{\mathbb{R}^{n}}(|\nabla_{g_0} K(a_{i})||x-a_{i}|+|x-a_{i}|^{2}) \tilde{\delta}_{a_{i}, \lambda_{i}}^{\frac{n+2\sigma}{n-2\sigma}}\tilde{v}\,\ud x\Big)\\
=&O\Big(\int_{B_{\rho}(a_i)\cup B_{\rho}^c(a_i)}(|\nabla_{g_0} K(a_{i})||x-a_{i}|+|x-a_{i}|^{2}) \tilde{\delta}_{a_{i}, \lambda_{i}}^{\frac{n+2\sigma}{n-2\sigma}}\tilde{v}\,\ud x\Big)\\
=& O\Big(\Big(\frac{|\nabla_{g_0} K(a_{i})|}{\lambda_{i}}+\frac{1}{\lambda_{i}^{2}}\Big)\|v\|\Big),
\end{aligned}
$$
where $\tilde{\delta}_{a_{i}, \lambda_{i}}={\delta}_{a_{i}, \lambda_{i}}\circ \Phi$, $\tilde{v}=v\circ \Phi$, and $\Phi$ is defined by \eqref{phi}. The proof is complete.
\end{proof}

\section{Critical points at infinity and their topological contribution}

Following Bahri and Coron in \cite{BahriCritical1989,BahriAn1996,BahriCoronThe1991}, we will use the following definition later.

\begin{definition}\label{def:3.1}
A critical point at infinity of $J_{K}$ on $\Sigma^{+}$ is a limit of a flow line $u(s)$ of the equation
$$
\begin{cases}
\displaystyle\frac{\ud u(s)}{\ud s}=-J_{K}^{\prime}(u(s)), \\
u(0)=u_{0} \in H^{\sigma}(\mathbb{S}^{n}),
\end{cases}
$$
such that $u(s)$ remains in $V(p, \varepsilon(s))$ for $s \geq s_{0}$, where $\varepsilon(s)>0$ and $\rightarrow 0$ as $s \rightarrow+\infty$, $s_0>0$ is some constant, and $u_{0}$ is an initial value.
\end{definition}

Using Proposition \ref{pro:2.2}, $u(s)$ can be written as
$$
u(s)=\sum_{i=1}^{p} \alpha_{i}(s) \delta_{a_{i}(s), \lambda_{i}(s)}+v(s).
$$
Let $\alpha_{i}:=\lim _{s \rightarrow+\infty} \alpha_{i}(s)$ and $a_{i}:=\lim _{s \rightarrow+\infty} a_{i}(s)$, then such a critical point at infinity is denoted by
$$
\sum_{i=1}^{p} \alpha_{i} \delta_{a_{i}, \infty}\quad \text { or }\quad (a_{1}, \cdots, a_{p})_{\infty}.
$$

Let us first recall the characterization of the critical points at infinity of $J_K$ for all $\sigma \in(0, \frac{n-2}{2})$ from \cite{SharafChtiouiConformal2020}. The characterization can be obtained through the estimates of the gradient vector field $J^{\prime}_{K}$ and the expansion of $J_{K}$.

\begin{proposition}\label{pro:3.4}
Let $n \geq 3$, $\sigma\in (0,\frac{n-2}{2})$ and $K$ satisfying the assumption ${\bf (nd)}$. Assume that $J_K$ has no critical points in $\Sigma^{+}$. Then the critical points at infinity of $J_K$ are
$$
(y_{1}, \cdots, y_{p})_{\infty}=\sum_{i=1}^{p} \frac{1}{K(y_{i})^{\frac{n-2 \sigma}{n}}} \delta_{y_{i}, \infty},
$$
where $(y_{1}, \cdots, y_{p}) \in \mathcal{M}$.

Moreover, the Morse index of $(y_{1}, \cdots, y_{p})_{\infty}$ is
$$
i_{\infty}(y_{1}, \cdots, y_{p})_{\infty}=p-1+\sum_{i=1}^{p}(n-ind(K, y_{i})),
$$
where $ind(K, y_{i})$ denotes the Morse index of $K$ at $y_{i}$.
\end{proposition}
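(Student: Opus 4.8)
The plan is to follow the now-standard scheme of Bahri–Coron type analysis adapted to the fractional setting, combining (a) the gradient estimates of $J_K$ in the neighborhoods $V(p,\varepsilon)$ of potential critical points at infinity, (b) the refined expansion of $J_K$ from Proposition \ref{pro:expansionJK}, and (c) a construction of a pseudo-gradient vector field that moves points out of $V(p,\varepsilon)$ except near the candidates $(y_1,\dots,y_p)_\infty$ with $(y_1,\dots,y_p)\in\mathcal{M}$. First I would recall that, having fixed $u=\sum_{i=1}^p\alpha_i\delta_{a_i,\lambda_i}+v\in V(p,\varepsilon)$ with $v=\bar v(\alpha,a,\lambda)$ the optimal correction of Proposition \ref{pro:2.4}, the $v$-contribution is negligible, so all the analysis reduces to the finite-dimensional problem in the variables $(\alpha_i,a_i,\lambda_i)$. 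Differentiating the expansion of Proposition \ref{pro:expansionJK} in $\lambda_i$ gives the leading term $\propto \alpha_i^{2n/(n-2\sigma)}\,\Delta_{g_0}K(a_i)/\lambda_i^2$ together with the interaction terms $\varepsilon_{ij}$; differentiating in $a_i$ produces the term $\propto \nabla_{g_0}K(a_i)/\lambda_i$ plus interactions. These two computations are essentially those of \cite{SharafChtiouiConformal2020} and \cite{ChenLiuZhengExistence2016}, and I would cite them rather than reproduce them.

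The core of the argument is then a dichotomy on the concentration points $a_i$. If some $a_i$ is away from $\mathcal{K}$, the $\nabla_{g_0}K(a_i)$ term dominates and one can decrease $J_K$ by moving $a_i$ along $-\nabla_{g_0}K$ while keeping $\lambda_i$ comparable; this builds a flow that leaves $V(p,\varepsilon)$, so no critical point at infinity has such a concentration point. If all $a_i$ are close to $\mathcal{K}$, one splits further: at a critical point $y$ of $K$ with $-\Delta_{g_0}K(y)<0$ (i.e.\ $y\notin\mathcal{K}^+$), the $\lambda_i$-derivative shows $J_K$ decreases as $\lambda_i\to\infty$, again yielding a non-compact but descending flow and excluding $y$ from appearing; so only points of $\mathcal{K}^+$ survive. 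When two concentration points $a_i,a_j$ both approach the \emph{same} point of $\mathcal{K}^+$, the interaction term $\varepsilon_{ij}$ becomes the dominant negative contribution (one checks the sign of the bracket $\tfrac{\alpha_i\alpha_j}{\Gamma_2}-\tfrac{2\alpha_i^{(n+2\sigma)/(n-2\sigma)}\alpha_jK(a_i)}{\Gamma_1}$ at the relevant values $\alpha_i^{4\sigma/(n-2\sigma)}K(a_i)J_K^{n/(n-2\sigma)}\approx 1$), so one can again construct a decreasing flow that separates or collapses the bubbles; this forces the $y_i$ to be pairwise distinct, i.e.\ $(y_1,\dots,y_p)\in\mathcal{M}$. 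Conversely, for $(y_1,\dots,y_p)\in\mathcal{M}$ the $p$ bubbles sit over isolated points of $\mathcal{K}^+$ with all $\varepsilon_{ij}$ exponentially small, the $\alpha_i$ are pinned (up to $o(1)$) to $K(y_i)^{-(n-2\sigma)/n}$ by the balancing condition coming from $\partial_{\alpha_i}J_K=0$, and there is genuinely no descent direction left: these are the true critical points at infinity, with value computed from the $p$-bubble formula. That pins down the stated form $\sum_i K(y_i)^{-(n-2\sigma)/n}\delta_{y_i,\infty}$.

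For the Morse index, I would perform a second-order analysis of $J_K$ restricted to a slice transverse to the flow at such a point at infinity. The variable $v$ contributes no negative direction since $Q(v,v)\geq\delta_0\|v\|^2$ by Proposition \ref{pro:2.4}. The variables split: in each pair $(a_i,\lambda_i)$, after the substitution that makes $\lambda_i$ large, the quadratic form in the $a_i$-directions is governed by the Hessian of $K$ at $y_i$ restricted to $\mathcal{K}^+$, contributing $n-\mathrm{ind}(K,y_i)$ negative directions (the ``expanding'' directions of $-K$), while the single $\lambda_i$-direction contributes one further negative direction per bubble because $-\Delta_{g_0}K(y_i)>0$ makes $J_K$ increase then, after the shift, behave like a decreasing direction — except that the overall scaling invariance removes exactly one of these, giving $p-1$ from the $\lambda$'s together with a combinatorial check that the cross terms $\varepsilon_{ij}$ are of lower order. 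Summing yields $i_\infty=(p-1)+\sum_{i=1}^p\bigl(n-\mathrm{ind}(K,y_i)\bigr)$.

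The main obstacle I anticipate is the careful bookkeeping in the combined regime: when the $a_i$'s cluster near $\mathcal{K}^+$ one must simultaneously control the $\lambda_i^{-2}$ terms, the $\varepsilon_{ij}$ interactions, and the $|\nabla_{g_0}K(a_i)|/\lambda_i$ terms, and show that in every sub-case at least one of them provides a strict descent direction for a globally defined pseudo-gradient — in particular verifying the sign of the interaction coefficient in all dimension ranges $n\gtrless 6\sigma$ (where the $\bar v$-estimate changes) and checking that the $o(\sum\varepsilon_{ij})+o(\|v\|^2)$ remainders do not spoil the leading behavior. The second delicate point is the index count for the $\lambda$-directions, where the conformal (dilation) invariance is responsible for the ``$-1$'' and must be isolated cleanly. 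Both of these are handled in \cite{SharafChtiouiConformal2020} for the fractional operator $P_\sigma$, and I would structure the proof as a reduction to the estimates there, supplying the sign computations of the interaction terms and the second-variation count as the new content.
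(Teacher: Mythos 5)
The paper itself gives no proof of Proposition \ref{pro:3.4}: it is recalled verbatim from \cite{SharafChtiouiConformal2020}, so your plan of reducing to the gradient estimates there (plus the expansion of Proposition \ref{pro:expansionJK} and the $\bar v$-estimate of Proposition \ref{pro:2.4}) is consistent with what the authors do. However, two steps of your sketch are wrong as written, and they concern exactly the content you propose to supply yourself. First, the exclusion dichotomy has its sign inverted. From Proposition \ref{pro:expansionJK}, the $\lambda_i$-dependence at a critical point $a_i\approx y$ is carried by the term proportional to $-\Delta_{g_0}K(a_i)/\lambda_i^2$; with the paper's conventions (see the proof of Lemma \ref{lem:3.5}, where this term appears as $+C\lambda_i^{-2}$, $C>0$, precisely when $-\Delta_{g_0}K(y_i)>0$), if $y\notin\mathcal{K}^+$, i.e. $-\Delta_{g_0}K(y)<0$, then $J_K$ \emph{increases} as $\lambda_i\to\infty$, so the decreasing pseudogradient drives $\lambda_i$ back down and no flow line stays in $V(p,\varepsilon(s))$ concentrating at $y$: that is the exclusion. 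Your sentence ``$J_K$ decreases as $\lambda_i\to\infty$, yielding a non-compact descending flow and excluding $y$'' is self-contradictory: by Definition \ref{def:3.1}, a noncompact descending flow line remaining in $V(p,\varepsilon(s))$ is precisely a critical point at infinity, and it is at the points of $\mathcal{K}^+$ that $J_K$ decreases in $\lambda_i$ with no other descent direction available. Carried out as stated, your dichotomy would select the complementary (wrong) set of concentration points.

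Second, the index bookkeeping is misattributed. The $p-1$ does not come from the $\lambda$-directions, and there is no ``scaling invariance removes one'' mechanism: in the normal form of Lemma \ref{lem:3.5} the $\lambda_i$ enter through $+C\sum_i\lambda_i^{-2}$ with $C>0$, hence contribute \emph{zero} negative directions at a critical point at infinity. The $p-1$ negative directions are carried by $\widetilde\alpha\in\mathbb{R}^{p-1}$: the function $g(\alpha,a)=\Gamma_2/\Gamma_1^{(n-2\sigma)/n}$ is homogeneous of degree zero in $\alpha$ and attains, in the $\alpha$-variable, a maximum of index $p-1$ at $\alpha_i\propto K(a_i)^{-(n-2\sigma)/(4\sigma)}$ (this also pins down the coefficients $K(y_i)^{-(n-2\sigma)/n}$ in the statement, after the normalization in $\Sigma$). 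The $a_i$-directions do contribute $\sum_i\bigl(n-ind(K,y_i)\bigr)$ as you say, and $v$ contributes nothing by the coercivity of $Q$. So your final formula is correct, but the argument you give for it would not survive being written out; both sign issues need to be fixed before the reduction to \cite{SharafChtiouiConformal2020} and \cite{ChenLiuZhengExistence2016} can be organized as you intend.
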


First, we have the following Morse Lemma, which completely gets rid of the $v$-contributions and shows that the functional $J_K$ behaves, at infinity, as $J_K(\sum_{i=1}^{p} \alpha_{i} \delta_{\tilde{a}_{i}, \tilde{\lambda}_{i}})+\|V\|^{2}$, where $V$ is a variable completely independent of $\tilde{a}_{i}, \tilde{\lambda}_{i}$. The proof is analog to the case $\sigma\in (0,1)$ in \cite{ChenLiuZhengExistence2016} and we omit here.

\begin{lemma}\label{lem:getridofv}
There is a covering $\{O_{l}\}$, a subset $\{(\alpha_{l}, a_{l}, \lambda_{l})\}$ of the base space for the bundle $V(p, \varepsilon)$ and a diffeomorphism $\xi_{l}: V(p, \varepsilon) \rightarrow V(p, \varepsilon^{\prime})$ for some $\varepsilon^{\prime}>0$ with
$$
\xi_{l}\Big(\sum_{i=1}^{p} \alpha_{i} \delta_{a_{i}, \lambda_{i}}+\bar{v}\Big)=\sum_{i=1}^{p} \alpha_{i} \delta_{\tilde{a}_{i}, \tilde{\lambda}_{i}},
$$
such that
$$
J_K\Big(\sum_{i=1}^{p} \alpha_{i} \delta_{a_{i}, \lambda_{i}}+v\Big)=J_K\Big(\sum_{i=1}^{p} \alpha_{i} \delta_{\tilde{a}_{i}, \tilde{\lambda}_{i}}\Big)+\frac{1}{2} J_K^{\prime \prime}\Big(\sum_{i=1}^{p} \alpha_{i} \delta_{a_{i}, \lambda_{i}}\Big) V_{l} \cdot V_{l},
$$
where $(\alpha, a, \lambda) \in O_{l}$, $(\alpha, \tilde{a}, \tilde{\lambda})$ is independent of $O_{l}$ and $V_{l}$ is orthogonal to $\delta_{\tilde{a}_{i}, \tilde{\lambda}_{i}}$, $\frac{\partial \delta_{\tilde{a}_{i}, \tilde{\lambda}_{i}}}{\partial \tilde{a}_{i}}$, $\frac{\partial \delta_{\tilde{a}_{i}, \tilde{\lambda}_{i}}}{\partial \tilde{\lambda}_{i}}$.
\end{lemma}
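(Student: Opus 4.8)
\textbf{Proof proposal for Lemma \ref{lem:getridofv}.}

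The plan is to realize the desired normal form by a finite-dimensional reduction followed by a parameter-dependent change of variables, exactly in the spirit of the Morse Lemma in the presence of a nondegenerate quadratic part. First I would work inside a single chart $O_l$ of the bundle $V(p,\varepsilon)$, where a local parameterization $(\alpha,a,\lambda,v)\mapsto \sum_i \alpha_i\delta_{a_i,\lambda_i}+v$ is available by Proposition \ref{pro:2.2}. On this chart, Proposition \ref{pro:2.4} produces the optimal $\bar v=\bar v(\alpha,a,\lambda)$ and, as in its proof, the second variation $Q(v,v)=\tfrac12\langle Av,v\rangle$ restricted to the space $(V_0)$ is uniformly positive definite, $\beta_0\mathrm{Id}\le A\le\beta_1\mathrm{Id}$. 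Writing $v=\bar v+w$ with $w\in(V_0)$ and using the expansion of $J_K$ from Proposition \ref{pro:expansionJK}, the $v$-dependent part of $J_K$ becomes $f(\bar v+w)+\tfrac12\langle A(\bar v+w),\bar v+w\rangle+o(\|v\|^2)$; since $\bar v$ is the critical point of this functional, the linear-in-$w$ terms cancel and one is left with $J_K(\sum_i\alpha_i\delta_{a_i,\lambda_i}+\bar v)+\tfrac12\langle (A+o(1))w,w\rangle$, i.e. a clean quadratic form $\tfrac12 J_K''(\sum_i\alpha_i\delta_{a_i,\lambda_i})w\cdot w$ in the $w$-direction up to the usual negligible errors.

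Next I would absorb the concentration parameters. The map $(\alpha,a,\lambda)\mapsto(\alpha,\tilde a,\tilde\lambda)$ defined by $\sum_i\alpha_i\delta_{a_i,\lambda_i}+\bar v(\alpha,a,\lambda)\mapsto\sum_i\alpha_i\delta_{\tilde a_i,\tilde\lambda_i}$ should be shown to be a diffeomorphism for $\varepsilon$ small: the correction $\bar v$ is of lower order by the estimate in Proposition \ref{pro:2.4}, so the associated Jacobian is a small $C^1$-perturbation of the identity and the inverse function theorem applies on each $O_l$. Composing this with the $v\mapsto(\bar v,w)$ splitting gives the diffeomorphism $\xi_l\colon V(p,\varepsilon)\to V(p,\varepsilon')$ with $\xi_l(\sum_i\alpha_i\delta_{a_i,\lambda_i}+\bar v)=\sum_i\alpha_i\delta_{\tilde a_i,\tilde\lambda_i}$, and by construction $(\alpha,\tilde a,\tilde\lambda)$ no longer depends on the particular chart. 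Finally, to turn $\tfrac12\langle(A+o(1))w,w\rangle$ into the stated $\tfrac12 J_K''(\sum_i\alpha_i\delta_{a_i,\lambda_i})V_l\cdot V_l$ with $V_l$ orthogonal to $\delta_{\tilde a_i,\tilde\lambda_i}$ and its derivatives in $\tilde a_i,\tilde\lambda_i$, I would apply the classical Morse Lemma with parameters (the Palais version): since $A+o(1)$ is uniformly invertible and $C^1$ in $(\alpha,a,\lambda)$, there is a $C^1$ family of invertible operators conjugating it to its value at a reference point, and one sets $V_l$ to be $w$ transported by this conjugation, then re-expresses the orthogonality in terms of the $\tilde a_i,\tilde\lambda_i$ frame (the difference between the $a_i,\lambda_i$ frame and the $\tilde a_i,\tilde\lambda_i$ frame being again of lower order, hence absorbable).

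A partition of unity subordinate to $\{O_l\}$ then globalizes the construction, yielding the covering and the family $\{(\alpha_l,a_l,\lambda_l)\}$ in the statement. The main obstacle I anticipate is not any single estimate but the bookkeeping required to check that all the maps involved are genuinely $C^1$ and that the various ``lower order'' corrections ($\bar v$, the gap between the two frames, the $o(1)$ in $A+o(1)$) are small \emph{uniformly} on $V(p,\varepsilon)$ so that the inverse function theorem and the parametric Morse Lemma apply with constants independent of the point; this is exactly where one invokes the quantitative bounds of Proposition \ref{pro:2.4} and the nondegeneracy $Q(v,v)\ge\delta_0\|v\|^2$. Since this is carried out in detail for $\sigma\in(0,1)$ in \cite{ChenLiuZhengExistence2016} and the only $\sigma$-dependence enters through the exponents in the $\bar v$-estimate (which remain $o(1)$ for all $\sigma\in(0,\tfrac{n-2}{2})$), the argument transfers verbatim, and I would simply refer to that reference for the routine parts.
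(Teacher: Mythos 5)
Your outline is essentially the paper's approach: the paper itself omits the argument and simply refers to \cite{ChenLiuZhengExistence2016}, where exactly this Bahri-type construction (splitting off $\bar{v}$, the parametric Morse lemma for the uniformly positive quadratic form on $(V_0)$, and the small perturbation of $(a,\lambda)$ into $(\tilde{a},\tilde{\lambda})$ absorbing the lower-order corrections) is carried out for $\sigma\in(0,1)$, and the extension to $\sigma\in(0,\tfrac{n-2}{2})$ only changes the exponents in the $\bar{v}$-estimate, as you note. One small remark: the final partition-of-unity step is unnecessary (and gluing diffeomorphisms this way would not literally work) — the statement is chart-wise in $l$, so no globalization is required.
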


Now we introduce a Morse Lemma at Infinity of $J_{K}$ near its critical points at infinity.

\begin{lemma}\label{lem:3.5}
Let $n\geq3$, $\sigma\in(0,\frac{n-2}{2})$ and $u=\sum_{i=1}^{p} \alpha_{i} \delta_{a_i, \lambda_i}+v \in V(p, \varepsilon)$, $p\geq1$, such that $a_{i} \in B_{\rho}(y_i)$, $\forall\, i=1, \cdots, p$ and $(y_{1}, \cdots, y_{p}) \in \mathcal{M}$. Then there exists a change of variables such that
$$
J_{K}(u)=S_{n}^{\frac{2 \sigma}{n}}\Big(\sum_{i=1}^{p} \frac{1}{K(y_{i})^{\frac{n-2 \sigma}{2 \sigma}}}\Big)^{\frac{2 \sigma}{n}}\Big(1-|\widetilde{\alpha}|^{2}+\sum_{i=1}^{p}
(|a_{i}^{-}|^{2}-|a_{i}^{+}|^{2})+C\sum_{i=1}^{p} \frac{1}{\lambda_{i}^{2}}\Big)+\|V\|^{2},
$$
where $\widetilde{\alpha} \in \mathbb{R}^{p-1}$ and $(a_{i}^{+}, a_{i}^{-})$ are the coordinates of $a_{i}$ near $y_{{i}}$ along the stable and unstable manifold for $K$.
\end{lemma}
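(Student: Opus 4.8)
The plan is to build the claimed normal form in three successive reductions, following the classical Bahri–Coron scheme adapted to the fractional setting. First, I would invoke Lemma \ref{lem:getridofv} to split off the $v$-variable: after applying the diffeomorphism $\xi_l$, the function $u$ is replaced by $\sum_{i=1}^p \alpha_i\delta_{\tilde a_i,\tilde\lambda_i}$ plus the quadratic term $\tfrac12 J_K''(\sum\alpha_i\delta_{a_i,\lambda_i})V\cdot V$. The coercivity estimate of Proposition \ref{pro:2.4} (via the operator $A$ with $\beta_0\mathrm{Id}\le A\le\beta_1\mathrm{Id}$) shows this quadratic form is positive definite in $V$, uniformly in the parameters, so it can be absorbed into a single $+\|V\|^2$ by a further linear change of the $V$-coordinate. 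From now on I work on the finite-dimensional reduced functional $\widehat J(\alpha,a,\lambda):=J_K(\sum_{i=1}^p\alpha_i\delta_{\tilde a_i,\tilde\lambda_i})$.

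Second, I would expand $\widehat J$ using Proposition \ref{pro:expansionJK} with $v\equiv 0$. The leading coefficient is $\dfrac{\sum\alpha_i^2 S_n}{(\sum\alpha_i^{2n/(n-2\sigma)}K(a_i)S_n)^{(n-2\sigma)/n}}$; writing $\alpha_i=\alpha_i^0(1+\beta_i)$ with $\alpha_i^0=K(y_i)^{-(n-2\sigma)/(4\sigma)}$ (the values that make the $\delta$'s solve the limiting equation, cf. Proposition \ref{pro:3.4}), a Taylor expansion of this ratio shows it equals $S_n^{2\sigma/n}\big(\sum_i K(y_i)^{-(n-2\sigma)/(2\sigma)}\big)^{2\sigma/n}$ times $\big(1-c|\widetilde\alpha|^2+\text{h.o.t.}\big)$, where $\widetilde\alpha\in\mathbb R^{p-1}$ are the $p-1$ independent combinations of the $\beta_i$ left after using the constraint $\|u\|=1$ (one $\beta$-direction is spent on the constraint, which is why the dimension drops by one). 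The remaining terms from Proposition \ref{pro:expansionJK}: the term $-\tfrac{n-2\sigma}{n}\tfrac{c_2}{\Gamma_1}\sum\alpha_i^{2n/(n-2\sigma)}\tfrac{\Delta_{g_0}K(a_i)}{\lambda_i^2}$ is, since $a_i\in B_\rho(y_i)$ with $y_i\in\mathcal K^+$ so $-\Delta_{g_0}K(y_i)>0$, of the form $+\sum_i \tfrac{C_i}{\lambda_i^2}$ with $C_i>0$; and the interaction sum $\sum_{i\ne j}\varepsilon_{ij}(\cdots)$ — here one must check that at $(\alpha^0,y)$ the bracketed coefficient $\big(\tfrac{\alpha_i\alpha_j}{\Gamma_2}-\tfrac{2\alpha_i^{(n+2\sigma)/(n-2\sigma)}\alpha_j K(a_i)}{\Gamma_1}\big)$ has a definite sign (in the present situation, by the two-point analysis of Proposition \ref{pro:3.4}, this forces the $\lambda_i$ comparison to behave so that $\sum_{i\ne j}\varepsilon_{ij}$ can be folded into the $\sum 1/\lambda_i^2$ expression). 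The near-$y_i$ behavior of $a\mapsto K(a)$, via the Morse Lemma for $K$ in the coordinates $(a_i^+,a_i^-)$, contributes $\sum_i(|a_i^-|^2-|a_i^+|^2)$ after rescaling by $1/\lambda_i^2$ is factored appropriately.

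Third, I would package these expansions into a genuine change of variables. The $\widetilde\alpha$-direction is already a nondegenerate negative-definite quadratic form, so the Morse Lemma with parameters (e.g. the standard Gromoll–Meyer splitting lemma) normalizes it to $-|\widetilde\alpha|^2$ absorbing all higher-order corrections. For the $(a_i^+,a_i^-,\lambda_i)$-block, the dominant quadratic structure is $\sum_i(|a_i^-|^2-|a_i^+|^2)+C\sum_i\lambda_i^{-2}$ up to cross terms of order $o(1/\lambda_i^2)$ and $o(|a_i|^2)$; applying the parametrized Morse Lemma again (treating $1/\lambda_i$ as a variable near $0$, after which the $1/\lambda_i^2$ coefficient $C$ can be taken positive and constant) straightens these out. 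The main obstacle will be step two, specifically controlling the interaction terms $\varepsilon_{ij}$ and the mixed higher-order remainders $o(\sum\varepsilon_{ij})$, $o(\|v\|^2)$, $o(1/\lambda_i^2)$ simultaneously and showing none of them destroys the nondegeneracy of the quadratic forms in $\widetilde\alpha$ and in $(a^\pm,\lambda)$ — this is where one genuinely uses that $(y_1,\dots,y_p)\in\mathcal M$ (distinct points, all in $\mathcal K^+$) so that the interactions are lower order and the $\lambda_i^{-2}$ coefficients have the correct sign. Once all remainders are shown to be absorbable, the composition of the three changes of variables yields the stated normal form, and reading off the number of negative directions, $(p-1)+\sum_i(n-\mathrm{ind}(K,y_i))$, recovers the index formula of Proposition \ref{pro:3.4} as a consistency check.
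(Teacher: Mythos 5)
Your overall scheme is the same as the paper's: use Lemma \ref{lem:getridofv} to reduce to the finite-dimensional functional plus the positive quadratic form in $V$, expand via Proposition \ref{pro:expansionJK}, and then apply the finite-dimensional Morse Lemma to $g(\alpha,a)=\frac{\sum_i\alpha_i^2S_n}{(\sum_i\alpha_i^{2n/(n-2\sigma)}K(a_i)S_n)^{(n-2\sigma)/n}}$, whose maximum in $\alpha$ (index $p-1$, giving $-|\widetilde\alpha|^2$) and whose critical point $y=(y_1,\dots,y_p)$ in $a$ (giving $\sum_i(|a_i^-|^2-|a_i^+|^2)$) produce the normal form, with the $+C\sum_i\lambda_i^{-2}$ coming from $-\Delta_{g_0}K(y_i)>0$.

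However, there is a genuine gap precisely at the point you yourself flag as ``the main obstacle'': the treatment of the interaction terms. You suggest checking the sign of the bracketed coefficient $\big(\frac{\alpha_i\alpha_j}{\Gamma_2}-\frac{2\alpha_i^{(n+2\sigma)/(n-2\sigma)}\alpha_jK(a_i)}{\Gamma_1}\big)$ and ``folding'' $\sum_{i\neq j}\varepsilon_{ij}$ into the $C\sum_i\lambda_i^{-2}$ term; this is not the mechanism and would not work as stated, since the positive constant $C$ comes solely from $-\Delta_{g_0}K(y_i)>0$ and one cannot a priori dominate $\varepsilon_{ij}$ by $\lambda_i^{-2}$ via a sign argument. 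The correct (and simple) point, which your plan never states, is quantitative: since $a_i\in B_\rho(y_i)$ with the $y_i$ pairwise distinct, $d(a_i,a_j)$ is bounded below, hence
$\varepsilon_{ij}\sim(\lambda_i\lambda_j)^{-(n-2\sigma)/2}\le C\big(\lambda_i^{-(n-2\sigma)}+\lambda_j^{-(n-2\sigma)}\big)=o\big(\lambda_i^{-2}+\lambda_j^{-2}\big)$,
where the last step uses $n-2\sigma>2$, i.e.\ exactly the hypothesis $\sigma\in(0,\frac{n-2}{2})$. Thus the entire interaction sum (whatever the sign of its coefficient) is absorbed into the remainder $o(\sum_i\lambda_i^{-2})$, and no sign analysis or ``two-point analysis of Proposition \ref{pro:3.4}'' is needed. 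Without this estimate your reduction to the quadratic normal form is not justified, and the role of the restriction on $\sigma$ --- which is where this lemma differs from the case $\sigma$ close to $n/2$ --- is left unexplained.
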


\begin{proof}
From Proposition \ref{pro:expansionJK} and Lemma \ref{lem:getridofv}, we have
$$
\begin{aligned}
J_K(u)=& \frac{\sum_{i=1}^{p} \alpha_{i}^{2} S_n}{(\sum_{i=1}^{p} \alpha_{i}^{\frac{2 n}{n-2 \sigma}} K(a_{i}) S_n)^{\frac{n-2 \sigma}{n}}}\Big[1-\frac{n-2 \sigma}{n} \frac{c_{2}}{\Gamma_{1}} \sum_{i=1}^{p} \alpha_{i}^{\frac{2 n}{n-2 \sigma}} \frac{\Delta_{g_0} K(a_{i})}{\lambda_{i}^{2}}\Big] \\
&+O\Big(\sum_{i \neq j} \varepsilon_{i j}\Big)+\frac{1}{2} J_K^{\prime \prime}\Big(\sum_{i=1}^{p} \alpha_{i} \delta_{a_{i}, \lambda_{i}}\Big) V_{l} \cdot V_{l}.
\end{aligned}
$$
Since $(y_{1}, \cdots, y_{p}) \in \mathcal{M}$, we get $|a_{i}-a_{j}|>0$. Therefore,
$$
\varepsilon_{i j} \sim \frac{1}{(\lambda_{i} \lambda_{j})^{\frac{n-2 \sigma}{2}}} \leq C\Big(\frac{1}{\lambda_{i}^{n-2 \sigma}}+\frac{1}{\lambda_{j}^{n-2 \sigma}}\Big)=o\Big(\frac{1}{\lambda_{i}^{2}}+\frac{1}{\lambda_{j}^{2}}\Big).
$$
Thus, the expansion of the functional $J_K$ can be rewritten as follows:
$$
\begin{aligned}
J_K(u)=&\frac{\sum_{i=1}^{p} \alpha_{i}^{2} S_n}{(\sum_{i=1}^{p} \alpha_{i}^{\frac{2 n}{n-2 \sigma}} K(a_{i}) S_n)^{\frac{n-2 \sigma}{n}}}\Big[1+\frac{n-2 \sigma}{n} \frac{c_{2}}{\Gamma_{1}} \sum_{i=1}^{p} \alpha_{i}^{\frac{2 n}{n-2 \sigma}} \frac{-\Delta_{g_0} K(a_{i})}{\lambda_{i}^{2}}+o\Big(\sum_{i=1}^{p} \frac{1}{\lambda_{i}^{2}}\Big)\Big]\\
&+\frac{1}{2} J_K^{\prime \prime}\Big(\sum_{i=1}^{p} \alpha_{i} \delta_{a_{i}, \lambda_{i}}\Big) V_{l} \cdot V_{l}.
\end{aligned}
$$
Except the term
$$
g(\alpha, a)=\frac{\sum_{i=1}^{p} \alpha_{i}^{2}S_n}{(\sum_{i=1}^{p} \alpha_{i}^{\frac{2n}{n-2\sigma}} K(a_{i})S_n)^{\frac{n-2\sigma}{n}}},
$$
all others are positive on the right hand side of the above equality. Note that $g(\alpha, a)$ is homogeneous in the variable $\alpha$ and has a maximum point
$$
\Big(\frac{1}{K(a_{1})^{\frac{n-2 \sigma}{4 \sigma}}}, \frac{1}{K(a_{2})^{\frac{n-2 \sigma}{4 \sigma}}}, \cdots, \frac{1}{K(a_{p})^{\frac{n-2 \sigma}{4 \sigma}}}\Big),
$$
thus the index of this critical point is $p-1$. On the other hand, $g(\alpha, a)$ has a single critical point $y=(y_{{1}}, y_{{2}}, \cdots, y_{{p}})$ in the $a$ variable. Thus, using the Morse Lemma in finite dimensional, after a change of variables, we have the following normal form,
$$
J_{K}(u)=S_{n}^{\frac{2 \sigma}{n}}\Big(\sum_{i=1}^{p} \frac{1}{K(y_{i})^{\frac{n-2 \sigma}{2 \sigma}}}\Big)^{\frac{2 \sigma}{n}}\Big(1-|\widetilde{\alpha}|^{2}+\sum_{i=1}^{p}
(|a_{i}^{-}|^{2}-|a_{i}^{+}|^{2})+C\sum_{i=1}^{p} \frac{1}{\lambda_{i}^{2}}\Big)+\|V\|^{2}.
$$
This completes the proof of the Lemma.
\end{proof}

Using Lemma \ref{lem:3.5}, we identify the level sets of critical points at infinity. Let us denote the corresponding critical point at infinity in Lemma \ref{lem:3.5} by $(y_{1}, \cdots, y_{p})_{\infty}$.

\begin{corollary}\label{cor:3.6}
The critical point at infinity $(y_{1}, \cdots, y_{p})_{\infty}$ is at level
$$
C_{\infty}(y_{1}, \cdots, y_{p}):=S_{n}^{\frac{2 \sigma}{n}}\Big(\sum_{i=1}^{p} \frac{1}{K(y_{{i}})^{\frac{n-2 \sigma}{2 \sigma}}}\Big)^{\frac{2 \sigma}{n}}.
$$
\end{corollary}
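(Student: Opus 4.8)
The plan is to read off the level directly from the normal form obtained in Lemma~\ref{lem:3.5}. By Definition~\ref{def:3.1}, the critical point at infinity $(y_1,\dots,y_p)_\infty$ is the limit as $s\to+\infty$ of a flow line $u(s)=\sum_{i=1}^p\alpha_i(s)\delta_{a_i(s),\lambda_i(s)}+v(s)$ of $-J_K^\prime$ that remains in $V(p,\varepsilon(s))$ with $\varepsilon(s)\to 0$, and by Proposition~\ref{pro:3.4} this forces $(y_1,\dots,y_p)\in\mathcal M$, $a_i(s)\to y_i$, $\alpha_i(s)\to 1/K(y_i)^{(n-2\sigma)/n}$ and $\lambda_i(s)\to+\infty$. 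Since $J_K$ is continuous along the flow, the level of the critical point at infinity is by definition $C_\infty(y_1,\dots,y_p):=\lim_{s\to+\infty}J_K(u(s))$, so it suffices to evaluate this limit.

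For $s$ large we have $a_i(s)\in B_\rho(y_i)$ for every $i$, hence the hypotheses of Lemma~\ref{lem:3.5} are met and, after the change of variables supplied there,
$$
J_K(u(s))=S_n^{\frac{2\sigma}{n}}\Big(\sum_{i=1}^p\frac{1}{K(y_i)^{\frac{n-2\sigma}{2\sigma}}}\Big)^{\frac{2\sigma}{n}}\Big(1-|\widetilde\alpha(s)|^2+\sum_{i=1}^p(|a_i^-(s)|^2-|a_i^+(s)|^2)+C\sum_{i=1}^p\frac{1}{\lambda_i(s)^2}\Big)+\|V(s)\|^2.
$$
A change of variables in the domain does not alter the values of $J_K$, so this identity is exact. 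It remains to check that every perturbation term in the parenthesis, as well as $\|V(s)\|^2$, tends to $0$: the coordinates $\widetilde\alpha(s)$ measure the deviation of $(\alpha_i(s))$ from the maximizer of $g(\alpha,a)$ in the $\alpha$-variable and hence $\to 0$; the coordinates $a_i^\pm(s)$ measure the deviation of $a_i(s)$ from $y_i$ along the stable/unstable directions of $K$ and hence $\to 0$; $\lambda_i(s)^{-2}\to 0$ since $\lambda_i(s)\to+\infty$; and $\|V(s)\|\to 0$ because, in the decomposition of Proposition~\ref{pro:2.2} and after the straightening of Lemma~\ref{lem:getridofv}, $\|v(s)\|$ is controlled by the quantities in Proposition~\ref{pro:2.4}, all of which vanish as $\lambda_i(s)\to\infty$ and $a_i(s)\to y_i\in\mathcal K$ (so $|\nabla_{g_0}K(a_i(s))|\to 0$). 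Passing to the limit gives $\lim_{s\to+\infty}J_K(u(s))=S_n^{\frac{2\sigma}{n}}\big(\sum_{i=1}^p K(y_i)^{-\frac{n-2\sigma}{2\sigma}}\big)^{\frac{2\sigma}{n}}$, which is exactly $C_\infty(y_1,\dots,y_p)$.

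This argument is essentially bookkeeping once Lemma~\ref{lem:3.5} is in hand; the only point requiring a little care — and the closest thing to an obstacle — is the justification that the defining flow line actually approaches the \emph{center} of the normal-form chart, i.e.\ that all of $\widetilde\alpha(s)$, $a_i^\pm(s)$, $\lambda_i(s)^{-1}$ and $V(s)$ vanish in the limit. This is where Propositions~\ref{pro:2.4} and~\ref{pro:3.4} are used, and where one should be explicit that $v(s)\in(V_0)$ is the minimizing $\bar v$ so that its norm is genuinely bounded by the concentration parameters.
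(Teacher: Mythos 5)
Your proposal is correct and follows the same route as the paper: the corollary is obtained by reading the level off the normal form of Lemma~\ref{lem:3.5}, noting that along the defining flow line all the perturbation terms ($\widetilde\alpha$, $a_i^{\pm}$, $\lambda_i^{-2}$, $\|V\|^2$) vanish in the limit, so $J_K(u(s))\to S_n^{\frac{2\sigma}{n}}\big(\sum_{i=1}^p K(y_i)^{-\frac{n-2\sigma}{2\sigma}}\big)^{\frac{2\sigma}{n}}$. Your extra bookkeeping on why the flow line approaches the center of the chart (via Propositions~\ref{pro:2.4} and~\ref{pro:3.4}, and in fact already from $\|v(s)\|<\varepsilon(s)\to0$ in the definition of $V(p,\varepsilon(s))$) is exactly the justification the paper leaves implicit.
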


At the end of this section, we derive the topological contribution of the critical points at infinity to the difference of topology between the level sets of the functional $J_{K}$. As a consequence of Corollary \ref{cor:3.6}, Proposition \ref{pro:3.4}, and the Morse reduction in Lemma \ref{lem:3.5}, we have

\begin{lemma}\label{lem:3.7}
Let $\tau_{\infty}$ be a critical point at infinity at the level $C_{\infty}(\tau_{\infty})$ with index $i_{\infty}(\tau_{\infty}) .$ Then for $\theta$ being a small positive number and a field $G$, we have
$$
H_{q}(J_{K}^{C_{\infty}(\tau_{\infty})+\theta}, J_{K}^{C_{\infty}(\tau_{\infty})-\theta} ; G)=
\begin{cases}
G & \text { if } \, q=i_{\infty}(\tau_{\infty}), \\
0 & \text { otherwise},
\end{cases}
$$
where $J_{K}^{A}:=\{u\in H^{\sigma}(\mathbb{S}^n)\mid J_{K}(u)\leq A\}$ and $H_{q}$ denotes the $q$-dimensional homology group with coefficients in the field $G$.
\end{lemma}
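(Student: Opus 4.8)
The plan is to deduce this local homology computation directly from the Morse-theoretic normal form established in Lemma~\ref{lem:3.5}, together with the identification of the critical level in Corollary~\ref{cor:3.6} and the index formula in Proposition~\ref{pro:3.4}. The point is that near the critical point at infinity $\tau_\infty=(y_1,\dots,y_p)_\infty$, Lemma~\ref{lem:3.5} exhibits $J_K$, after an admissible change of variables, as the quadratic-type function
$$
J_K(u)=C_\infty(\tau_\infty)\Big(1-|\widetilde\alpha|^2+\sum_{i=1}^p(|a_i^-|^2-|a_i^+|^2)+C\sum_{i=1}^p\frac{1}{\lambda_i^2}\Big)+\|V\|^2,
$$
where $C_\infty(\tau_\infty)=S_n^{2\sigma/n}(\sum_i K(y_i)^{-(n-2\sigma)/2\sigma})^{2\sigma/n}$ is exactly the level of the critical point at infinity. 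So the first step is to record that, in a neighborhood $U$ of $\tau_\infty$ in $\Sigma^+$, the pair $(J_K^{C_\infty(\tau_\infty)+\theta}\cap U,\, J_K^{C_\infty(\tau_\infty)-\theta}\cap U)$ is, up to the excision of a deformation-retractible piece, the pair obtained from this explicit model function, and that $\tau_\infty$ is the only critical point at infinity at this level (generically; if several critical points at infinity share the level one treats them one at a time, the homologies adding up — but for the statement as phrased we may assume $\tau_\infty$ is isolated at its level).

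Next I would analyze the model. Introduce the variable $t_i=1/\lambda_i$, so that the $\lambda_i\to\infty$ direction becomes the coordinate half-line $t_i\ge 0$ near $t_i=0$; the term $C\sum t_i^2$ is then a genuine positive-definite quadratic form in the $t_i$ on the boundary stratum $\{t_i=0\}$, contributing nothing to the index, exactly as in the standard Bahri "expanding" direction. The coordinates $\widetilde\alpha\in\mathbb{R}^{p-1}$ contribute a maximum (index $p-1$), the unstable directions $a_i^-$ of $K$ at $y_i$ contribute a maximum in $\sum_i(n-\mathrm{ind}(K,y_i))$ variables, the stable directions $a_i^+$ contribute a minimum, and $\|V\|^2$ contributes a minimum in the infinite-dimensional complement. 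Collecting the negative directions gives total Morse index
$$
(p-1)+\sum_{i=1}^p\big(n-\mathrm{ind}(K,y_i)\big)=i_\infty(\tau_\infty),
$$
in agreement with Proposition~\ref{pro:3.4}. The second step is therefore the standard local Morse-lemma computation: for a nondegenerate (in the relevant finite-dimensional factors, and with a positive-definite remainder in $V$) critical point of index $k$ at level $c$, the pair $(\{f\le c+\theta\},\{f\le c-\theta\})$ has the homotopy type of $(D^k,\partial D^k)$, whence $H_q(\{f\le c+\theta\},\{f\le c-\theta\};G)=G$ for $q=k$ and $0$ otherwise. The half-space constraint $t_i\ge 0$ does not change this because those directions enter with a positive sign and the boundary $\{t_i=0\}$ is where the critical point at infinity sits, so the sublevel sets retract onto their traces on a neighborhood within $\{t_i\ge 0\}$ without creating extra topology.

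The third step is to pass from the local statement to the global one asserted in the lemma. Here one invokes the standard deformation-lemma machinery for $J_K$ on $\Sigma^+$ in the region below the first genuine critical value: since $J_K$ has no critical points in $\Sigma^+$ (the working hypothesis, cf. Proposition~\ref{pro:3.4}), the only obstruction to deforming $J_K^{C_\infty(\tau_\infty)+\theta}$ down to $J_K^{C_\infty(\tau_\infty)-\theta}$ is the critical point at infinity $\tau_\infty$, and the pseudogradient flow constructed in the critical-points-at-infinity framework (Bahri, \cite{BahriCritical1989,BahriAn1996}) gives a deformation retract of $J_K^{C_\infty(\tau_\infty)+\theta}$ onto $J_K^{C_\infty(\tau_\infty)-\theta}$ with a cell of dimension $i_\infty(\tau_\infty)$ attached along $\tau_\infty$. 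By excision the relative homology of the global pair equals that of the local model pair computed in step two. I expect the main obstacle to be the careful bookkeeping in step one and three: justifying that the change of variables of Lemma~\ref{lem:3.5} can be performed locally and compatibly with the flow, that the boundary directions $\lambda_i\to\infty$ genuinely behave as a one-sided nondegenerate factor (so that no spurious homology from the boundary stratum appears), and that the pseudogradient flow indeed isolates $\tau_\infty$ at its level — all of which are by now standard in this literature but require invoking the $C^1$-reduction (Lemma~\ref{lem:getridofv}) and the gradient estimates underlying Proposition~\ref{pro:3.4} rather than re-deriving them.
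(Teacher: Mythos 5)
Your proposal is correct and takes essentially the same route as the paper, which gives no separate argument for Lemma \ref{lem:3.7} but derives it exactly as you do: from the Morse reduction at infinity of Lemma \ref{lem:3.5}, the level identification of Corollary \ref{cor:3.6} and the index formula of Proposition \ref{pro:3.4}, via the standard attachment of a cell of dimension $i_{\infty}(\tau_{\infty})$ and excision/deformation along the pseudogradient flow. The only cosmetic discrepancy is that you place the negative directions in the $a$-variables on the coordinates $a_i^{-}$, whereas the normal form in Lemma \ref{lem:3.5} carries the minus sign on $a_i^{+}$ (a stable/unstable labeling convention); since your dimension count $\sum_{i}(n-\mathrm{ind}(K,y_i))$ and the resulting total index agree with Proposition \ref{pro:3.4}, the homology computation is unaffected.
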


\section{Proofs of Theorem \ref{thm:1.2} and Theorem \ref{thm:1.4}}
This section is devoted to the proof of Theorems \ref{thm:1.2} and \ref{thm:1.4}. Our proofs are based on the characterization of the critical points at infinity in Proposition \ref{pro:3.4} and the computation of their contribution to the difference of topology between level sets in Lemma \ref{lem:3.7}. In addition, the proof of these two theorems needs two deformation lemmas. The first one reads as follows:

\begin{lemma}\label{lem:4.1}
Let $\underline{A}>0$ be a constant and $\overline{A}:=(K_{\max } / K_{\min })^{(n-2\sigma) /n} \underline{A}$. Assume that $J_{K}$ does not have any critical point nor critical point at infinity in the set $J_{K}^{\overline{A}} \backslash J_{K}^{\underline{A}}$. Then for each $c \in [\underline{A}, \overline{A}]$, the level set $J_{K}^{c}$ is contractible.
\end{lemma}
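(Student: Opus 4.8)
The plan is to exploit a standard fact from the critical-points-at-infinity theory: if a functional has no critical points (finite or at infinity) in a strip of energy values, then the sublevel set at the top of the strip deformation retracts onto the sublevel set at the bottom. So first I would invoke the deformation lemma (the pseudo-gradient flow of $-J_K$ together with Proposition \ref{pro:2.1} and Proposition \ref{pro:3.4}): under the hypothesis that $J_K$ has neither critical points nor critical points at infinity in $J_K^{\overline{A}} \setminus J_K^{\underline{A}}$, the set $J_K^{\overline{A}}$ retracts by deformation onto $J_K^{\underline{A}}$, and more generally, for any $c, c' \in [\underline{A}, \overline{A}]$ with $c \le c'$, the set $J_K^{c'}$ retracts by deformation onto $J_K^{c}$. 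In particular all the level sets $J_K^c$ for $c \in [\underline{A}, \overline{A}]$ are homotopy equivalent to one another; it suffices to show that one of them, say $J_K^{\underline{A}}$, is contractible.

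The key observation — this is the role of the specific value $\overline{A} = (K_{\max}/K_{\min})^{(n-2\sigma)/n}\underline{A}$ — is that $J_K^{\overline{A}}$ contains the set $\Sigma^+$ of all positive functions on $\Sigma$, or at least a contractible subset onto which $\Sigma^+$ retracts. Indeed, for $u \in \Sigma^+$ one has the elementary estimate
$$
J_K(u) = \frac{1}{\big(\int_{\mathbb{S}^n} K u^{\frac{2n}{n-2\sigma}}\,\ud v_{g_0}\big)^{\frac{n-2\sigma}{n}}} \le \frac{1}{K_{\min}^{\frac{n-2\sigma}{n}}\big(\int_{\mathbb{S}^n} u^{\frac{2n}{n-2\sigma}}\,\ud v_{g_0}\big)^{\frac{n-2\sigma}{n}}},
$$
while $J_{K_{\min}}$ (the functional with constant weight $K_{\min}$) is bounded below on $\Sigma^+$ by $S_n K_{\min}^{-(n-2\sigma)/n}$, and the infimum of $J_{K_{\max}}$ controls things from the other side; a short computation with the Yamabe constant $S_n$ of Beckner's inequality shows that, after normalizing $\underline{A}$ against $S_n$, the sublevel $J_K^{\overline{A}}$ contains $\{u \in \Sigma : u \ge 0\}$ (equivalently its deformation retract $\Sigma^+$). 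The point is that $\{u \in \Sigma : u \ge 0\}$ is contractible: it retracts onto the single function $u_0 \equiv \text{const} > 0$ by the straight-line homotopy $t \mapsto \frac{(1-t)u + t u_0}{\|(1-t)u + t u_0\|}$, which stays in the nonnegative cone and never hits $0$.

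Combining the two steps: $J_K^{\underline{A}} \simeq J_K^{\overline{A}} \supseteq \{u \ge 0\} \simeq \{u_0\}$, and since the retraction of $J_K^{\overline{A}}$ onto $J_K^{\underline{A}}$ and the contractibility of $\{u \ge 0\}$ are compatible through the flow, each $J_K^c$ for $c \in [\underline{A}, \overline{A}]$ is contractible. The step I expect to be the main obstacle is pinning down the precise numerology: verifying that the exponent $(n-2\sigma)/n$ in the definition of $\overline{A}$ is exactly what is needed so that the bound $J_K(u) \le \overline{A}$ holds for every $u$ in (a retract of) the nonnegative cone, and simultaneously so that $J_K^{\underline{A}}$ is large enough to contain the image of the contraction — i.e. checking that one does not lose contractibility by the retraction pushing part of the set below level $\underline{A}$. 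This requires carefully tracking the constants $S_n$, $K_{\max}$, $K_{\min}$ through Beckner's sharp inequality and the variational characterization of $J_K$, but it is otherwise routine.
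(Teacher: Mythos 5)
Your first step (the pseudogradient deformation retraction of $J_K^{\overline{A}}$ onto $J_K^{\underline{A}}$ in the absence of critical points and critical points at infinity in the strip) agrees with the paper. The gap is in your key claim that $J_K^{\overline{A}}$ contains $\Sigma^{+}$ (or the nonnegative cone of $\Sigma$). This is false: $J_K$ is unbounded from above on $\Sigma^{+}$ --- a normalized sum of $p$ well-separated bubbles has $J_K \approx (pS_n)^{2\sigma/n}K(\cdot)^{-(n-2\sigma)/n}$, which tends to infinity with $p$ --- so no finite sublevel set can contain $\Sigma^{+}$. Moreover, in the only place the lemma is used (Proposition \ref{pro:4.2}), $\underline{A}$ sits just above the $\ell$-bubble levels at infinity and $\overline{A}<C_{\min}^{\ell+1,\infty}$, so $J_K^{\overline{A}}$ is a genuinely small sublevel. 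The specific ratio $(K_{\max}/K_{\min})^{(n-2\sigma)/n}$ has nothing to do with swallowing the positive cone; it is calibrated so that a sublevel of the \emph{round} functional $J_1:=J_{K\equiv 1}$ fits in between: from $K_{\min}^{(n-2\sigma)/n}\le J_1(u)/J_K(u)\le K_{\max}^{(n-2\sigma)/n}$ one gets $J_K^{\underline{A}}\subset J_1^{A'}\subset J_K^{\overline{A}}$ with $A'=K_{\max}^{(n-2\sigma)/n}\underline{A}$, and $J_1^{A'}$ is contractible because the only critical points of $J_1$ are minima at the bottom level $S_n$, so every flow line descends to the bottom. That sublevel of $J_1$, not the cone $\{u\ge 0\}$, is the contractible set you should sandwich.

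There is also a structural weakness in your concluding chain $J_K^{\underline{A}}\simeq J_K^{\overline{A}}\supseteq\{u\ge0\}\simeq\{u_0\}$: a space containing a contractible subspace need not be contractible, so even with a correct middle set you must transfer contractibility properly. With the sandwich $J_K^{\underline{A}}\subset J_1^{A'}\subset J_K^{\overline{A}}$ this can be done either by restricting the retraction $J_K^{\overline{A}}\to J_K^{\underline{A}}$ to $J_1^{A'}$ (a retract of a contractible space is contractible), or, as the paper does, by an explicit homotopy $F(t,u)=\phi_1\bigl(s_1(\phi_K(t\,s_K(u),u)),\phi_K(t\,s_K(u),u)\bigr)$ built from the two pseudogradient flows of $J_K$ and $J_1$, which exhibits $J_K^{\underline{A}}$ as a deformation retract of the contractible set $J_1^{A'}$. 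So the overall architecture you chose is the right one, but the identification of the intermediate contractible set is wrong, and the passage from ``contains a contractible set'' to ``is contractible'' needs the retraction argument made explicit.
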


\begin{proof}
Since we assumed that $J_{K}$ does not have any critical point nor critical point at infinity in $\Sigma^{+}$ between the levels $\underline{A}$ and $\overline{A}$, we have that $J_{K}^{\overline{A}}$ retracts by deformation onto $J_{K}^{\underline{A}}$. Indeed, such a retraction can be realized by following the flow lines of a decreasing pseudogradient $Z$ for $J_{K}$. In the sequel, we denote by $\phi_{K}$ the one parameter group corresponding to this pseudogradient. For each $u \in \Sigma^{+}$, we use $s_{K}(u)$ to denote the first time such that $\phi_{K}(s_{K}(u), u) \in J_{K}^{\underline{A}}$.

Recall that the only critical points of $J_{1}$ are minimum point and lie in the bottom level $S_{n}$, where $J_1:=J_{K\equiv1}$. Furthermore, by following the flow lines of a decreasing pseudogradient $Z_{1}$ of the functional $J_{1}$, each flow line, starting from $u \in \Sigma^{+}$, will reach the bottom level $S_{n}$. Hence the set $J_{1}^{A}$ is a contractible one for each $A>S_{n}$. We denote by $\phi_{1}$ the one parameter group corresponding to pseudogradient $Z_{1}$.

Since
$$
(1 / K_{\max }^{(n-2\sigma) /n}) J_{1}(u) \leq J_{K}(u) \leq(1 / K_{\min }^{(n-2\sigma) /n}) J_{1}(u)\quad \text { for each }\, u \in \Sigma^{+},
$$
we get
$$
J_{K}^{\underline{A}} \subset J_{1}^{A^{\prime}} \subset J_{K}^{\overline{A}},
$$
where  $A^{\prime}:=K_{\max }^{(n-2\sigma) /n} \underline{A}$. Moreover, we observe that for each $u \in \Sigma^{+}$, there exists a unique $s_{1}(u)$ satisfying $\phi_{1}(s_{1}(u), u) \in J_{1}^{A^{\prime}}$.

Define
$$
\begin{aligned}
F: [0,1] \times J_{1}^{A^{\prime}} &\rightarrow J_{1}^{A^{\prime}}\\
(t, u) &\mapsto \phi_{1}(s_{1}(\phi_{K}(t s_{K}(u), u)), \phi_{K}(t s_{K}(u), u)).
\end{aligned}
$$
It is easy to see that $F$ is well defined and continuous. Furthermore, $F$ satisfies the following properties:
\begin{itemize}
  \item For $t=0$, $\phi_{K}(0, u)=u$. Moreover, for each $u \in J_{1}^{A^{\prime}}$, $s_{1}(u)=0$. Therefore, for each $u \in J_{1}^{A^{\prime}}$, $F(0, u)=\phi_{1}(0, u)=u$.
  \item For $t=1$, by the definition of $s_{K}$, we have $\phi_{K}(s_{K}(u), u) \in J_{K}^{\underline{A}} \subset J_{1}^{A^{\prime}}$, which implies that $s_{1}(\phi_{K}(s_{K}(u), u))=0$. Therefore, for each $u \in J_{1}^{A^{\prime}}$, $F(1, u)=\phi_{1}(0, \phi_{K}(s_{K}(u), u))=\phi_{K}(s_{K}(u), u) \in J_{K}^{\underline{A}}$.
  \item If $u \in J_{K}^{\underline{A}}$, then $s_{K}(u)=0$, which implies that $\phi_{K}(t s_{K}(u), u)=\phi_{K}(0, u)=u$. Therefore, for each $u \in J_{K}^{\underline{A}}$ and each $t \in[0,1]$, $F(t, u)=\phi_{1}(s_{1}(u), u)=\phi_{1}(0, u)=u$. Notice that $s_{1}(u)=0$ since $u \in J_{K}^{\underline{A}} \subset J_{1}^{A^{\prime}}$.
\end{itemize}
Thus $J_{1}^{A^{\prime}}$ retracts by deformation onto $J_{K}^{\underline{A}}$. Since $J_{1}^{A^{\prime}}$ is a contractible set, this finishes the proof.
\end{proof}

For $\ell \in \mathbb{N}$, define
$$
C_{\max }^{\ell, \infty}:=(\ell S_{n})^{2\sigma/n} / K_{\min }^{(n-2\sigma) /n} \quad \text { and } \quad C_{\min }^{\ell, \infty}:=(\ell S_{n})^{2\sigma/n} / K_{\max }^{(n-2\sigma) /n}.
$$
Using Corollary \ref{cor:3.6}, it is easy to see that the level of critical points at infinity corresponding to $\ell$ points lies between $C_{\min }^{\ell, \infty}$ and $C_{\max }^{\ell, \infty}$. Indeed, by Corollary \ref{cor:3.6} we have
$$
(p S_{n})^{2\sigma/n} / K_{\max }^{(n-2\sigma) /n}\leq C_{\infty}(y_{1}, \cdots, y_{p})\leq (p S_{n})^{2\sigma/n} / K_{\min }^{(n-2\sigma) /n}.
$$

The second deformation lemma is a consequence of Lemma \ref{lem:4.1} and an appropriate pinching condition imposed to the function $K$.

\begin{proposition}\label{pro:4.2}
For $k \in \mathbb{N}$ being fixed, let $K$ satisfying the condition ${\bf (nd)}$ and the pinching condition $K_{\max } / K_{\min }<((k+1) / k)^{\sigma /(n-2\sigma)}$. If $J_{K}$ does not have any critical point under the level $C_{\min }^{k+1, \infty}$. Then, for every $1 \leq \ell \leq k$ and every $c \in(C_{\max }^{\ell, \infty}, C_{\min }^{\ell+1, \infty})$, the level set $J_{K}^{c}$ is contractible.
\end{proposition}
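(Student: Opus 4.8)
The plan is to reduce Proposition~\ref{pro:4.2} to Lemma~\ref{lem:4.1}: given the prescribed level $c$, I would choose the two constants $\underline{A}$ and $\overline{A}$ appearing in that lemma so that $c\in[\underline{A},\overline{A}]$ while the slab $J_K^{\overline{A}}\setminus J_K^{\underline{A}}$ is squeezed \emph{strictly} between the $\ell$-th and the $(\ell+1)$-st bands $[C_{\min}^{p,\infty},C_{\max}^{p,\infty}]$ on which critical points at infinity may occur. Throughout set $\rho:=(K_{\max}/K_{\min})^{(n-2\sigma)/n}$, which is precisely the forced ratio $\overline{A}/\underline{A}=\rho\,\underline{A}/\underline A$ in Lemma~\ref{lem:4.1}; note $\rho>1$ because the condition ${\bf (nd)}$ prevents $K$ from being constant.

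The one non-routine ingredient is a ``gap estimate'': for every $1\le\ell\le k$,
$$
\rho\,C_{\max}^{\ell,\infty}<C_{\min}^{\ell+1,\infty}.
$$
Substituting $C_{\max}^{\ell,\infty}=(\ell S_n)^{2\sigma/n}/K_{\min}^{(n-2\sigma)/n}$ and $C_{\min}^{\ell+1,\infty}=((\ell+1)S_n)^{2\sigma/n}/K_{\max}^{(n-2\sigma)/n}$, this inequality is equivalent to
$$
\frac{K_{\max}}{K_{\min}}<\Big(\frac{\ell+1}{\ell}\Big)^{\frac{\sigma}{n-2\sigma}}.
$$
Since $\ell\mapsto\big((\ell+1)/\ell\big)^{\sigma/(n-2\sigma)}$ is decreasing and $\ell\le k$, the right-hand side is $\ge\big((k+1)/k\big)^{\sigma/(n-2\sigma)}$, which exceeds $K_{\max}/K_{\min}$ by the pinching hypothesis. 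In particular the interval $(C_{\max}^{\ell,\infty},C_{\min}^{\ell+1,\infty})$, to which $c$ belongs, is non-empty.

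With this in hand, given $c\in(C_{\max}^{\ell,\infty},C_{\min}^{\ell+1,\infty})$ I would set
$$
\underline{A}:=\max\{C_{\max}^{\ell,\infty},\,c/\rho\},\qquad \overline{A}:=\rho\,\underline{A}=\max\{\rho\,C_{\max}^{\ell,\infty},\,c\}.
$$
A short check gives $\underline{A}>0$, $\underline{A}\le c\le\overline{A}$ (using $\rho>1$ and $c>C_{\max}^{\ell,\infty}$), $C_{\max}^{\ell,\infty}\le\underline{A}$, and $\overline{A}<C_{\min}^{\ell+1,\infty}$ by the gap estimate together with $c<C_{\min}^{\ell+1,\infty}$; hence the set of values attained on $J_K^{\overline{A}}\setminus J_K^{\underline{A}}$, namely $(\underline{A},\overline{A}]$, lies inside the open gap $(C_{\max}^{\ell,\infty},C_{\min}^{\ell+1,\infty})$. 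It then remains to verify the hypothesis of Lemma~\ref{lem:4.1} that this slab contains neither a critical point nor a critical point at infinity of $J_K$. For critical points at infinity I would invoke Proposition~\ref{pro:3.4} and Corollary~\ref{cor:3.6}: a critical point at infinity with $p$ concentration points has critical value in $[C_{\min}^{p,\infty},C_{\max}^{p,\infty}]$, and by monotonicity of $p\mapsto C_{\min}^{p,\infty}$ and $p\mapsto C_{\max}^{p,\infty}$ this band lies in $(0,C_{\max}^{\ell,\infty}]$ when $p\le\ell$ and in $[C_{\min}^{\ell+1,\infty},\infty)$ when $p\ge\ell+1$, hence is disjoint from $(\underline{A},\overline{A}]$. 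For genuine critical points, $\overline{A}<C_{\min}^{\ell+1,\infty}\le C_{\min}^{k+1,\infty}$ since $\ell+1\le k+1$, so a critical value in $(\underline{A},\overline{A}]$ would lie below $C_{\min}^{k+1,\infty}$, contradicting the hypothesis. Lemma~\ref{lem:4.1} then yields that $J_K^{c'}$ is contractible for every $c'\in[\underline{A},\overline{A}]$, in particular for the given $c$.

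The only genuinely delicate point is the gap estimate: one must check that the exponent $\sigma/(n-2\sigma)$ in the pinching condition is exactly the one for which a slab of multiplicative width $\rho=(K_{\max}/K_{\min})^{(n-2\sigma)/n}$ fits strictly between two consecutive bands of critical points at infinity (and that it suffices to impose the condition at $\ell=k$, since the requirement loosens as $\ell$ increases). Everything else is the elementary bookkeeping above together with the deformation–retraction argument already supplied by Lemma~\ref{lem:4.1}.
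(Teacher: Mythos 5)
Your proposal is correct and follows essentially the same route as the paper: the key step in both is the gap estimate $\rho\,C_{\max}^{\ell,\infty}<C_{\min}^{\ell+1,\infty}$ with $\rho=(K_{\max}/K_{\min})^{(n-2\sigma)/n}$, derived from the pinching condition (loosest at $\ell=k$), followed by an application of Lemma \ref{lem:4.1} to a slab $[\underline{A},\overline{A}]$ placed in the gap between consecutive bands of critical points at infinity, with Proposition \ref{pro:3.4} and Corollary \ref{cor:3.6} ruling out critical points at infinity there and the hypothesis ruling out genuine critical points. Your explicit choice $\underline{A}=\max\{C_{\max}^{\ell,\infty},c/\rho\}$ is in fact slightly more careful than the paper's $\underline{A}=C_{\max}^{\ell,\infty}+\gamma$ in covering levels $c$ near the top of the interval.
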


\begin{proof}
Using $K_{\max } / K_{\min }<((k+1) / k)^{\sigma /(n-2\sigma)}$, we get for each $1 \leq \ell \leq k$, $(k+1) / k \leq (\ell+1) / \ell$ and
$$
C_{\max }^{\ell, \infty}<C_{\max }^{\ell, \infty}(K_{\max } / K_{\min })^{(n-2\sigma) /n}<C_{\min }^{\ell+1, \infty}.
$$
Indeed, we have
$$
\begin{aligned}
C_{\max }^{\ell, \infty} &<C_{\max }^{\ell, \infty}(K_{\max } / K_{\min })^{(n-2\sigma) /n} \\
&<(\ell S_{n})^{2\sigma /n} / K_{\min }^{(n-2\sigma) /n}((\ell+1) / \ell)^{\sigma/n} \\
&=((\ell+1) S_{n})^{2\sigma /n} / K_{\min }^{(n-2\sigma) /n}(\ell /(\ell+1))^{\sigma/n} \\
&<((\ell+1) S_{n})^{2\sigma /n} / K_{\min }^{(n-2\sigma) /n}(K_{\max } / K_{\min })^{-(n-2\sigma) /n} \\
&<C_{\min }^{\ell+1, \infty}.
\end{aligned}
$$
By taking $\underline{A}=C_{\max }^{\ell, \infty}+\gamma$ with $\gamma>0$ small enough such that $\overline{A}<C_{\min }^{\ell+1, \infty}$, it is easy to see that the functional $J_{K}$ does not have any critical point nor critical point at infinity between the levels $\underline{A}$ and $\overline{A}$. Hence this proposition follows from Lemma \ref{lem:4.1}.
\end{proof}

Now we are ready to complete the proofs of the two main theorems.

\begin{proof}[Proof of Theorem \ref{thm:1.4}]
Suppose the contrary, then the functional $J_{K}$ does not have any critical point. In particular, $J_{K}$ has no critical points under the level $C_{\min }^{2, \infty}$. Under the assumption of Theorem \ref{thm:1.4}, it follows from Proposition \ref{pro:4.2} (with $k=1$) that $J_K^{C_{\max }^{1, \infty}+\gamma}$ is a contractible set for $\gamma>0$ small enough, and it is a retract by deformation of $J^{C_{\min}^{2, \infty}}_K$.

Using Proposition \ref{pro:3.4} and Corollary \ref{cor:3.6}, we obtain that critical points at infinity under the level $C_{\min }^{2, \infty}$ are in one-to-one correspondence with critical points of $K$ in $\mathcal{K}^+$. Then it follows from Lemma \ref{lem:3.7} and the Euler-Poincar\'e theorem that
$$
1=\chi(J_K^{C_{\min }^{2, \infty}+\gamma})=\sum_{z \in \mathcal{K}^+}(-1)^{n-i n d(K, z)},
$$
which contradicts to the assumption (ii) of Theorem \ref{thm:1.4}. This finishes the proof.
\end{proof}

\begin{proof}[Proof of Theorem \ref{thm:1.2}]
Notice that under the assumption of this theorem, if $A_{1} \neq 1$, the existence of at least one solution to \eqref{1.1} follows from Theorem \ref{thm:1.4}. Therefore, we only need to consider the case $A_{1}=1$.

We claim that the number $N:=\sharp \mathcal{K}^+$ has to be odd. In fact, if $A_{1}=1$ we get $p-q=1$, where $p$ and $q$ are the number of critical points in $\mathcal{K}^+$ with even and odd number $\iota(z)$ respectively, where $\iota(z):=n-{ind}(K, z)$. On the other hand, we have $p+q=N$. Thus we obtain $2 p=N+1$. This completes the proof of our claim.

Assuming that $J_K$ does not have any critical point. In particular, $J_{K}$ has no critical points under the level $C_{\min }^{3, \infty}$. Applying Proposition \ref{pro:4.2} with $k=2$, we obtain that the level sets $J_K^{C_{\max }^{1, \infty}+\gamma}$ and $J_K^{C_{\max }^{2, \infty}+\gamma}$ are contractible sets for $\gamma>0$ small enough. Then it follows from the properties of the Euler characteristic (see \cite[Proposition 5.7]{DoldLectures1995}) that
$$
1=\chi(J_K^{C_{\max }^{2, \infty}+\gamma})=\chi(J_K^{C_{\max }^{2, \infty}+\gamma}, J_K^{C_{\max }^{1, \infty}+\gamma})+\chi(J_K^{C_{\max }^{1, \infty}+\gamma})=\chi(J_K^{C_{\max }^{2, \infty}+\gamma}, J_K^{C_{\max }^{1, \infty}+\gamma})+1.
$$
That is $\chi(J_K^{C_{\max }^{2, \infty}+\gamma}, J_K^{C_{\max }^{1, \infty}+\gamma})=0 .$ Furthermore, it follows from Proposition \ref{pro:3.4} and Corollary \ref{cor:3.6} that the critical points at infinity between these two levels are $(y_{i}, y_{j})_{\infty}$ with $y_{i} \neq y_{j} \in \mathcal{K}^+$. Thus, it follows from Lemma \ref{lem:3.7} and the Euler-Poincar\'e theorem that
\begin{equation}\label{4.1}
\sum_{y_{i} \neq y_{j} \in \mathcal{K}^+}(-1)^{1+\iota(y_{i})+\iota(y_{j})}=0.
\end{equation}
Recall that the number $N:=\sharp \mathcal{K}^+$ is odd, i.e., $N=2 k+1$ for some $k \in \mathbb{N}$, and there are $k$ odd numbers $\iota(y_{i})$'s and $k+1$ even numbers $\iota(y_{i})$'s. We claim that
\begin{equation}\label{4.2}
A_{2}:=\sum_{i<j}(-1)^{\iota(y_{i})+\iota(y_{j})}=-k.
\end{equation}
In fact, notice that the value of $A_{2}$ is the sum of $+1$ and $-1$. To get +1, $\iota(y_{i})$ and $\iota(y_{j})$ have to be of the same parity, and to get $-1$, $\iota(y_{i})$ and $\iota(y_{j})$ have to be of different parity. Therefore
\begin{itemize}
  \item For $k=0$, we have only one point $y$ with an even $\iota(y)$. Thus $A_{2}=0$.
  \item For $k=1$, we have two points $y_{0}$ and $y_{2}$ with even $\iota(y_{i})$ and one point $y_{1}$ with an odd $\iota(y_{1})$. Thus, $A_{2}=1-2=-1$.
  \item For $k \geq 2$, we have $k+1$ points with even $\iota(y_{i})$ and $k$ points with odd $\iota(y_{i})$. Thus,
$$
A_2=C_{k+1}^2+C_k^2-C_{k+1}^1C_k^1=\frac{1}{2}(k+1) k+\frac{1}{2} k(k-1)-(k+1) k=-k.
$$
\end{itemize}
Then the claim follows from the above arguments. Now, \eqref{4.1} and \eqref{4.2} imply that $k=0$ and hence $\sharp \mathcal{K}^+=1$. This contradicts to our assumption $\sharp \mathcal{K}^+ \geq 2$. The proof of Theorem \ref{thm:1.2} is complete.
\end{proof}

\noindent Z. Tang \& N. Zhou

\noindent School of Mathematical Sciences, Laboratory of Mathematics and Complex Systems, MOE, Beijing Normal University, Beijing, 100875, China\\[1mm]
Email: \textsf{tangzw@bnu.edu.cn}\\
Email: \textsf{nzhou@mail.bnu.edu.cn}

\end{document}